 \newtheorem{theorem}{Theorem}[section]
 \newtheorem{lemma}[theorem]{Lemma}
\newtheorem{prop}[theorem]{Proposition}
 \theoremstyle{definition}
 \newtheorem{defn}[theorem]{Definition}
 \theoremstyle{remark}
\newcommand{\RNum}[1]{\uppercase\expandafter{\romannumeral #1\relax}}
\begin{document}

%
%
%
%
%
%
%
%
%

\title[f-WANs fPDEs]{Fractional weak adversarial networks for the stationary fractional advection dispersion equations }

\author[Feng]{Dian Feng}

\address{School of Mathematical Sciences\\
Fudan University\\ 
Shanghai 200433\\
China}

\email{dfeng19@fudan.edu.cn}

\author[Yang]{Zhiwei Yang$^*$}

\address{School of Mathematical Sciences\\
Fudan University\\ 
Shanghai 200433\\
China}

\email{zhiweiyang@fudan.edu.cn}

\author[Zou]{Sen Zou}

\address{School of Mathematical Sciences\\
Fudan University\\ 
Shanghai 200433\\
China}

\email{szou18@fudan.edu.cn}

\let\thefootnote\relax\footnotetext{$^*$Corresponding author: Zhiwei Yang}
\thanks{This research was    funded     by the China Postdoctoral Science Foundation    2022M720809.
}
\subjclass{35B65; 34A08}

\keywords{fractional differential equation, fractional weak adversarial network}

\date{today}

\begin{abstract}

In this article, we propose the fractional weak adversarial networks (f-WANs) for the stationary fractional advection dispersion equations (FADE) based on their weak formulas. This enables us to handle less regular solutions for the fractional equations. To handle the non-local property of the fractional derivatives, convolutional layers and special loss functions are introduced in this neural network. Numerical experiments for both smooth and less regular solutions show the validity of f-WANs. 
\end{abstract}

\maketitle
\section{Introduction}

Recently, there has been a significant surge of interest in fractional differential equations, due to their widespread application in modeling complex phenomena that exhibit memory and long-range dependence across various fields, such as turbulent flow \cite{Chen, ShlWes} and viscoelastic constitutive law \cite{Mainar}. This increasing interest has led to a lot of research focused on creating and analyzing methods to solve these equations \cite{KilSri, Pod, Weilbe}. 
In this paper, we propose a neural network for the numerical solution of the steady-state fractional advection dispersion equation (FADE) in high-dimensional spaces. The FADE has its application in modeling physical phenomena of anomalous diffusion \cite{JinRun, MetKla}. The theoretical well-posedness and numerical methods of this equation can be found in \cite{ErvRoo, WanYan, ErvRoo1, Roop, ZhuLiu}.
However, the non-local nature of fractional differential operators implies that the  coefficient matrix associated with FADE tends to be dense or even full during discretization. 
Moreover, as the dimensionality increases, traditional methods inevitably require discretizing the solution domain, resulting in a substantial increase in the storage cost for computers. 
To overcome the numerical difficulties of nonlocal property and dimensionality, we explore the neural network solutions for space-fractional differential equations.

Over the last few years, various neural network structures have been developed to solve partial differential equations (PDEs) \cite{EYu, FanBoh, FanLin, KhooLu, KhoYin, LiLu, LongLu, NabMei, RaiPer, ShaTan, SirSpi, ZanBao}. Analogous to the difference between supervised and unsupervised learning algorithms, the neural networks for solving PDEs  can be roughly  divided into two categories.  The first category of neural networks approximate the solution map governed by specified PDEs directly by training over the large set of boundary/initial conditions and the corresponding solutions \cite{FanBoh, FanLin, KhooLu, KhoYin, LiLu, LongLu, ShaTan}. An example of this neural network structure is the BCR-Net proposed in \cite{FanBoh} based on wavelet transform. These neural networks design specific structure due to certain properties of the porposed solution map, but the computational costs when generating the training data is significantly large. The second category of neural networks aims to approximate equations or the deformation of equations under specific boundary/initial conditions \cite{EYu, RaiPer, ZanBao}. Physics-informed neural networks (PINNs) serve as a prime example of this category \cite{RaiPer}. By representing the solution as a neural network and then incorporating the equation and boundary conditions to derive the loss function for training, this method shows its efficienticy in deployment for various types of equations.

When it comes to fractional differential equations, a so-called fractional PINNs (fPINNs) were introduced in \cite{PangLu} to extend PINNs into fractional cases. In addition to the article \cite{PangLu}, \cite{GuoWu} proposed the Monte Carlo fPINNs method to address the computational challenges posed by the high dimensionality of fPINNs. Due to the $L_2$ formalution of the loss function, these PINNs based methods have good performence on classical solutions. However, the PINNs based methods may not perform well for less smooth solutions \cite{ZanBao}.

To fill this research gap where solution is less regular, we propose a novel neural network structure, called fractional weak adversarial networks (f-WANs), solving the weak formula of the fractional differential equations using the generative adversarial structure. Such a structure has been applied to elliptic partial differential equations and related inverse problem in \cite{ZanBao}.To be more precise, we use the Monte Carlo sampling to obtain the parameterization of the weak solution and the test function in the weak formulation of the fraction equations as two neural networks, which are trained alternately in a confrontational manner to obtain the solution of the given minimax problem. There are several advantages to our approach. Firstly, it can handle situations where the classical solution does not exist. Secondly, our method can overcome the ``curse of dimensionality" as it uses the Monte Carlo sampling method within the region, thereby avoiding the grid division brought by traditional numerical methods.

The structure of the paper is as follows: In Section \ref{Sect:pre}, we present the formulation of the problems addressed in this paper and establish their uniqueness. Section \ref{Sect:WANf} provides an introduction to the WAN framework and outlines the proposed algorithm for training neural networks. In Section \ref{Sect:NumerExper}, we present the neural network architectures for the solution function and the test function, along with numerical examples to illustrate their effectiveness. Finally, Section \ref{Sect:Conc} concludes the paper.

\section{Preliminaries and problem formulation}\label{Sect:pre}
In this section, we introduce the definitions of fractional operators and fractional order spaces. These concepts will serve as the foundation for deriving the weak form of the fractional order equation using the variational approach. The conditional uniqueness of our problem is proved.
\subsection{Fractional order derivative}
For completeness we first introduce the definition of Riemann-Liouville fractional integral:

\begin{defn}\label{Def:R-L_int} \rm 
  (Riemann-Liouville Fractional Integral \cite{Pod}). Let $u$ be a $L^1$ function defined on $(a, b)$, and $\alpha>0$. Then the left and right Riemann-Liouville fractional integrals of order $\alpha$ are defined as
\begin{equation*}
\begin{aligned}
J^{\alpha} u(x) & :=\frac{1}{\Gamma(\alpha)} \int_a^x(x-w)^{\alpha-1} u(w) \mathrm{d} w,\\
 J^{\alpha}_{-} u(x) & :=\frac{1}{\Gamma(\alpha)} \int_x^b(w-x)^{\alpha-1} u(w) \mathrm{d} w,
\end{aligned}
\end{equation*}
where $\Gamma(\cdot)$ is the Gamma function.
\end{defn}

The Riemann-Liouville fractional derivatives are given by taking normal derivatives of the Riemann-Liouville fractional integrals:
\begin{defn} \rm   (Riemann-Liouville Fractional Derivative \cite{Pod}). For $\alpha\in(0,1)$, the left and right Riemann-Liouville fractional derivatives of order $\alpha$ are defined as follows:
\begin{equation}
\begin{split} 
&\partial^{\alpha} u(x):=\frac{\partial}{\partial x}J^{1-\alpha} u(x)=\frac{1}{\Gamma(\alpha)} \frac{\partial}{\partial x} \int_{a}^{x} u(s)(x-s)^{-\alpha} \mathrm{d} s, \\
& \partial_{-}^{\alpha} u(x):= - \frac{\partial}{\partial x} J^{1-\alpha}_{-} u(x)=\frac{-1}{\Gamma(\alpha)} \frac{\partial}{\partial x} \int_{x}^{b} u(s)(s-x)^{-\alpha} \mathrm{d} s.
\end{split}
\end{equation}
\end{defn}

\subsection{Stationary fractional advection dispersion equations}
In this paper we assume the domain of definition is a rectangular domain $\Omega= \left(\underline{x}_{i}, \overline{x}_{i} \right)^n\subset\mathbb{R}^n$. We use the notation $\underline{{\ \cdot\ }}$ and  $\overline{{\ \cdot\ }}$ to denote the lower and upper boundaries in each direction. Now we are ready to give the Dirichlet problem for the stationary FADE:
\begin{equation}\label{frac_prob}\left\{
\begin{aligned}
	\mathcal{L} u & =f, & & \text { in } \Omega, \\
	u & =g, & & \text { on } \partial \Omega,
\end{aligned}\right.
\end{equation}
where
\begin{equation}\label{frac_oper}
    \mathcal{L} u:= -\sum_{i=1}^{n}\partial_{x_{i}}\left(p_{i}   J^{\alpha}_{\underline{x}_i}+q_{i} J^{\alpha}_{\overline{x}_i-} \right)\partial_{x_{i}} u,
\end{equation}
with constants $\alpha\in (0,1)$, and $\sum\limits_{i=1}^{n}(p_{i}+q_{i})=1$ with $ p_{i},q_{i}>0$ for $1\leq i \leq n$. Here $J^{\alpha}_{\underline{x}_i}$ denotes the left fractional integral over the interval $(\underline{x}_i,x_i)$, while $J^{\alpha}_{\overline{x}_i-}$ represents the right fractional integral over the interval $(x_i, \overline{x}_i)$. 

Now we introduce some function spaces related to the fractional derivatives. Recall the usual fractional order Sobolev spaces $H^{\alpha}(0,T)$, see e.g. \cite{AdaFou}. Following the notation in \cite{KubRys}, for any constant $\alpha\in(0,1)$, we define the Banach spaces
$$
H_{\alpha}(0, T):=\left\{
\begin{aligned}
&\left\{v\in H^{\alpha}; v(0)=0\right\}, && \frac{1}{2}<\alpha<1, \\
&\left\{v \in H^{\frac{1}{2}}(0, T);\, \int_{0}^{T} \frac{|v(t)|^{2}}{t} d t<\infty\right\}, && \alpha=\frac{1}{2}, \\ &H^{\alpha}(0, T), &&0<\alpha<\frac{1}{2}
 \end{aligned}\right.
$$
with the norm
$$
\|v\|_{H_{\alpha}(0, T)}=\left\{
\begin{aligned}
&\|v\|_{H^{\alpha}(0, T)}, && \alpha \neq \frac{1}{2}, \\
&\left(\|v\|_{H^{\frac{1}{2}}}^{2}+\int_{0}^{T} \frac{|v(t)|^{2}}{t} d t\right)^{\frac{1}{2}}, &&\alpha=\frac{1}{2}.
\end{aligned}\right.
$$

Next we define the Sobolev spaces given by Riemann-Liouville fractional derivative for $s>0$:
$$\hat{H}_{s}(\Omega)=\left\{ v\in L^2(\Omega), \partial^{s}_{x_{i}} v\in L^2(\Omega), \,\text{for all }\, i=1,\dots,n\right\},$$
with the norm
$$
\|v\|^2_{\hat{H}_{s}(\Omega)}:=\sum_{i=1}^n\|\partial^{s}_{x_{i}} v\|^2_{L^2(\Omega)}.
$$
The next proposition states that the $L^2$-norm of a function can be bounded by its $H_{1-\alpha}$-norm.

\begin{prop}\label{prop:norm_ine}
Given any $\alpha\in(0,1)$, for any $u\in H_{1-{\alpha}}(\underline{x},\overline{x})$, the following inequality holds
\begin{equation}
    \|u\|_{L^2(\underline{x},\overline{x})} \leq C  \|J^{\alpha} \partial_x u \|_{L^2(\underline{x},\overline{x})}.
\end{equation}
where $C>0$ is a constant.
\end{prop}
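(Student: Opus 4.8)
The plan is to prove the pointwise representation
\[
u \;=\; J^{1-\alpha}\!\left(J^{\alpha}\partial_x u\right)\qquad\text{on }(\underline x,\overline x),
\]
and then conclude from the $L^{2}$-boundedness of fractional integration. After a translation we may take $(\underline x,\overline x)=(0,L)$ with $L=\overline x-\underline x$. Two elementary facts are needed. First, the semigroup property $J^{1-\alpha}J^{\alpha}=J^{1}$, which follows from Fubini's theorem and the identity $\int_{0}^{1}(1-\tau)^{-\alpha}\tau^{\alpha-1}\,d\tau=\Gamma(\alpha)\Gamma(1-\alpha)$; together with $\frac{d}{dx}J^{1}w=w$ and the fundamental theorem of calculus it gives, for every $\phi\in C^{1}[0,L]$,
\[
J^{1-\alpha}\!\left(J^{\alpha}\partial_x\phi\right)(x)=J^{1}\partial_x\phi(x)=\phi(x)-\phi(0).
\]
Second, $J^{\beta}$ is bounded on $L^{2}(0,L)$ for every $\beta\in(0,1)$: realizing $J^{\beta}w$ as the convolution of the zero-extension of $w$ with the kernel $x_{+}^{\beta-1}/\Gamma(\beta)\in L^{1}(0,L)$, Young's convolution inequality gives $\|J^{\beta}w\|_{L^{2}(0,L)}\le \frac{L^{\beta}}{\Gamma(\beta+1)}\|w\|_{L^{2}(0,L)}$.

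With these in hand the argument is a density/continuity passage. In each of the three regimes defining $H_{1-\alpha}(0,L)$ the functions $\phi\in C^{1}[0,L]$ with $\phi(0)=0$ are dense (using the vanishing trace for $\alpha<\tfrac12$, the weighted condition $\int_{0}^{L}|u|^{2}/t\,dt<\infty$ for $\alpha=\tfrac12$, and, for $\tfrac12<\alpha<1$, the fact that point values are not controlled in $H^{1-\alpha}$ when $1-\alpha<\tfrac12$). Picking $\phi_{k}\in C^{1}[0,L]$ with $\phi_{k}(0)=0$ and $\phi_{k}\to u$ in $H_{1-\alpha}(0,L)$ we have $\phi_{k}=J^{1-\alpha}(J^{\alpha}\partial_x\phi_{k})$. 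Interpreting $J^{\alpha}\partial_x u$ as the Riemann--Liouville derivative $\partial^{1-\alpha}u=\frac{d}{dx}J^{\alpha}u$ (which is forced once $u$ has only fractional regularity, and which agrees with $J^{\alpha}\partial_x\phi$ whenever $\phi(0)=0$, by $\partial^{1-\alpha}\phi=J^{\alpha}\partial_x\phi+\frac{x^{\alpha-1}}{\Gamma(\alpha)}\phi(0)$), one uses the boundedness $\partial^{1-\alpha}:H_{1-\alpha}(0,L)\to L^{2}(0,L)$ to get $J^{\alpha}\partial_x\phi_{k}=\partial^{1-\alpha}\phi_{k}\to\partial^{1-\alpha}u=J^{\alpha}\partial_x u$ in $L^{2}$, and then the $L^{2}$-continuity of $J^{1-\alpha}$ to pass to the limit in $\phi_{k}=J^{1-\alpha}(J^{\alpha}\partial_x\phi_{k})$. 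This yields $u=J^{1-\alpha}(J^{\alpha}\partial_x u)$ in $L^{2}(0,L)$, and the Young estimate gives $\|u\|_{L^{2}}\le\frac{L^{1-\alpha}}{\Gamma(2-\alpha)}\|J^{\alpha}\partial_x u\|_{L^{2}}$.

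The routine parts are the semigroup identity and the convolution bound; the real work is the limiting step, i.e.\ making precise that $J^{\alpha}\partial_x u$ means $\partial^{1-\alpha}u\in L^{2}$ for $u\in H_{1-\alpha}$ and that the ``boundary term'' disappears. Concretely one checks $J^{1-\alpha}\partial^{1-\alpha}u=u-\frac{(J^{\alpha}u)(0)}{\Gamma(1-\alpha)}x^{-\alpha}$ with $(J^{\alpha}u)(0)=0$: for $\tfrac12<\alpha<1$ this is immediate from $|J^{\alpha}u(x)|\le Cx^{\alpha-1/2}\|u\|_{L^{2}}\to0$, for $\alpha<\tfrac12$ from $u\in H^{1-\alpha}\hookrightarrow C^{0}$ with $u(0)=0$, and for $\alpha=\tfrac12$ from the weighted bound. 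An alternative, nonconstructive route --- if the inequality failed, take $u_{k}$ with $\|u_{k}\|_{L^{2}}=1$ and $\|J^{\alpha}\partial_x u_{k}\|_{L^{2}}\to0$, extract an $L^{2}$-convergent subsequence via compactness of $H_{1-\alpha}\hookrightarrow L^{2}$, and contradict the injectivity of $\partial^{1-\alpha}$ on $H_{1-\alpha}$ --- needs a uniform $H_{1-\alpha}$-bound on $u_{k}$, which reduces to the same operator identities, so the direct argument above is preferable.
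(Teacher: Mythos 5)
Your proposal is correct and follows essentially the same route as the paper: write $u=J^{1-\alpha}\bigl(J^{\alpha}\partial_x u\bigr)$ and bound $J^{1-\alpha}$ on $L^{2}$ by Young's convolution inequality with the $L^{1}$ kernel $x^{-\alpha}/\Gamma(1-\alpha)$. The only difference is that you carefully justify the inversion identity (density, the vanishing of the boundary term $(J^{\alpha}u)(0)$ in each regime of $H_{1-\alpha}$), a step the paper's proof asserts without comment.
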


\begin{proof}
Denote $v:=J^{\alpha} \partial_x u$, by applying the fractional integral both sides we obtain
 $u=J^{1-\alpha}  v$. 
Therefore it's sufficient to prove  $\|J^{1-\alpha} v\|_{L^2(\underline{x},\overline{x})} \leq C \|  v \|_{L^2(\underline{x},\overline{x})} $. 
Using Young's inequality \cite{AdaFou},
$$
\begin{aligned}
\|J^{1-\alpha} v\|_{L^2(\underline{x},\overline{x})}&=\| \frac{1}{\Gamma(1-\alpha)} \int_{x_l}^{x} (x-w)^{-\alpha}v(w)\mathrm{d}w \|_{L^2(\underline{x},\overline{x})} \\
&= C \|x^{-\alpha}*v\|_{L^2(\underline{x},\overline{x})}\\
&\leq C \|x^{-\alpha}\|_{L^1(\underline{x},\overline{x})} \|v\|_{L^2(\underline{x},\overline{x})} \\
& \leq C \|v\|_{L^2(\underline{x},\overline{x})},
\end{aligned}
$$
which concludes the proof.
\end{proof}
Now we state a conditional uniqueness result of \eqref{frac_prob}.
\begin{theorem}\label{them:uniqu_the}
If the equation \eqref{frac_prob} admits a non-trivial solution $u\in \hat{H}_{1}(\Omega)$ , then $u$ is unique in the sense of $\hat{H}_{1-\alpha}(\Omega)$ for any fixed $\alpha\in (0, 1)$, i.e. for any $u_1, u_2 \in \hat{H}_{1}(\Omega)$ satisfy \eqref{frac_prob}, $$\|u_1-u_2\|_{\hat{H}_{1-\alpha}(\Omega)}=0.$$
\end{theorem}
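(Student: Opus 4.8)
\medskip
\noindent\textbf{Proof proposal.}
The plan is to reduce to a homogeneous problem by linearity, pass to the energy (weak) formulation by testing against the difference itself, and close the argument with the coercivity of the fractional integral operators appearing in $\mathcal{L}$. Since $\mathcal{L}$ in \eqref{frac_oper} is linear and $u_1,u_2$ solve \eqref{frac_prob} with the same right-hand side $f$ and the same boundary datum $g$, the difference $w:=u_1-u_2$ satisfies the homogeneous problem $\mathcal{L}w=0$ in $\Omega$, $w=0$ on $\partial\Omega$. Observing that $\partial^{1}_{x_i}=\partial_{x_i}$, the space $\hat{H}_1(\Omega)$ coincides with $H^1(\Omega)$, so $w\in H^1_0(\Omega)$; in particular $w$ is an admissible test function below, and the non-triviality hypothesis in the statement is not actually used — only linearity and homogeneity of the reduced problem are needed.

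Next I would multiply $\mathcal{L}w=0$ by $w$, integrate over $\Omega$, and integrate by parts in each variable $x_i$ with respect to the \emph{outer} derivative $\partial_{x_i}$; the boundary contributions vanish because $w\in H^1_0(\Omega)$. This yields the energy identity
\[
\sum_{i=1}^{n}\int_{\Omega}\Big[\,p_i\,(J^{\alpha}_{\underline{x}_i}\partial_{x_i}w)\,(\partial_{x_i}w)+q_i\,(J^{\alpha}_{\overline{x}_i-}\partial_{x_i}w)\,(\partial_{x_i}w)\,\Big]\,\mathrm{d}x=0 .
\]
Here each $\partial_{x_i}w\in L^2(\Omega)$, and by Young's inequality (exactly as in the proof of Proposition~\ref{prop:norm_ine}) the functions $J^{\alpha}_{\underline{x}_i}\partial_{x_i}w$ and $J^{\alpha}_{\overline{x}_i-}\partial_{x_i}w$ also lie in $L^2(\Omega)$, so every integral is finite. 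Moreover fractional integration by parts, $\int (J^{\alpha}\phi)\psi=\int \phi\,(J^{\alpha}_{-}\psi)$ (Fubini), applied with $\psi=\phi=\partial_{x_i}w$, shows that the left and right integral terms coincide in each direction.

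Then I would invoke the coercivity of the fractional integral from the Ervin--Roop variational framework (equivalently, in the $H_{\alpha}$-space setting of Kubica--Ryszewska cited above): for $\phi$ extended by zero outside the $i$-th interval, $\int (J^{\alpha}_{\underline{x}_i}\phi)\,\phi\,\mathrm{d}x=\cos(\tfrac{\pi\alpha}{2})\,\|J^{\alpha/2}_{\underline{x}_i}\phi\|^2_{L^2}$, and similarly for the right integral. Since $\alpha\in(0,1)$ we have $\cos(\tfrac{\pi\alpha}{2})>0$, and since $p_i,q_i>0$ the energy identity forces $J^{\alpha/2}_{\underline{x}_i}\partial_{x_i}w=0$ a.e.\ for every $i$. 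Applying $J^{\alpha/2}$ once more together with the semigroup property gives $J^{\alpha}_{\underline{x}_i}\partial_{x_i}w=0$. Finally, because $w$ vanishes on the face $\{x_i=\underline{x}_i\}$, one has $w=J^{1}_{\underline{x}_i}\partial_{x_i}w$ in that variable, whence $\partial^{1-\alpha}_{x_i}w=\partial_{x_i}J^{\alpha}_{\underline{x}_i}w=\partial_{x_i}J^{1}_{\underline{x}_i}\!\big(J^{\alpha}_{\underline{x}_i}\partial_{x_i}w\big)=J^{\alpha}_{\underline{x}_i}\partial_{x_i}w=0$ for every $i$, so $\|w\|^2_{\hat{H}_{1-\alpha}(\Omega)}=\sum_{i=1}^{n}\|\partial^{1-\alpha}_{x_i}w\|^2_{L^2(\Omega)}=0$, which is the assertion. (The same information in fact yields $w\equiv0$ via Proposition~\ref{prop:norm_ine}, but only the stated conclusion is required.)

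The step I expect to be the main obstacle is the rigorous justification of the two reductions used implicitly above. First, legitimizing the integration by parts that turns the strong equation \eqref{frac_prob} into the energy identity for a solution merely of class $\hat{H}_1(\Omega)$: since $\mathcal{L}$ collects all directions into a single divergence, a term-by-term integration by parts need not be directly available, so one may instead approximate $w$ by functions that are smooth up to $\partial\Omega$ (dense in $H^1_0(\Omega)$) and use continuity of the bilinear form on $H^1_0(\Omega)$. Second, quoting the fractional-integral coercivity with the correct class of admissible functions, and carefully passing between the bounded-domain operators $J^{\alpha}_{\underline{x}_i},J^{\alpha}_{\overline{x}_i-}$ and their whole-line Fourier representations (this is delicate near $\alpha=\tfrac12$). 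The identity $\partial_{x_i}J^{\alpha}_{\underline{x}_i}w=J^{\alpha}_{\underline{x}_i}\partial_{x_i}w$ under the boundary condition is, by contrast, a one-line consequence of $J^{\alpha}J^{1}=J^{1+\alpha}$ and $\partial_{x_i}J^1_{\underline{x}_i}=\mathrm{id}$.
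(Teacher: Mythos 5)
Your proposal is correct and rests on the same two pillars as the paper's proof: the energy identity obtained by testing the equation against the solution and integrating by parts (with the adjointness of $J^{\alpha}_{\underline{x}_i}$ and $J^{\alpha}_{\overline{x}_i-}$ used to treat the left and right terms together), and the positivity of the fractional integral pairing $\langle J^{\alpha}\phi,\phi\rangle$. The difference lies in how the conclusion is extracted. The paper proves the conditional stability estimate \eqref{uniq_ine} for the homogeneous-boundary problem: it bounds the energy above by $\|u\|_{L^2}\|f\|_{L^2}$ and below, via the Kubica--Ryszewska coercivity $\int w\,\partial^{\alpha}_{x_i}w\ge C\|w\|^2_{L^2}$ with $w=J^{\alpha}_{\underline{x}_i}\partial_{x_i}u$ combined with Proposition~\ref{prop:norm_ine}, and must then divide by $\|u\|^2_{L^2}$ --- which is precisely why the statement carries the hypothesis that the solution be non-trivial. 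You instead apply the energy identity directly to the difference $w=u_1-u_2$, for which the right-hand side is exactly zero, so the coercivity (in your case the Ervin--Roop form $\langle J^{\alpha}\phi,\phi\rangle=\cos(\tfrac{\pi\alpha}{2})\|J^{\alpha/2}\phi\|^2$, equivalent in effect to the paper's) forces each term to vanish outright; no division is needed and, as you correctly observe, the non-triviality hypothesis becomes superfluous. That is a genuine, if modest, simplification for the uniqueness claim; what the paper's longer route buys in exchange is the quantitative bound $\|u\|_{\hat{H}_{1-\alpha}(\Omega)}\le C\|f\|_{L^2(\Omega)}$, which is of independent interest. Both arguments carry the same technical debts --- justifying the integration by parts for $u\in\hat{H}_1(\Omega)$ only, and the identification $\partial^{1-\alpha}_{x_i}w=J^{\alpha}_{\underline{x}_i}\partial_{x_i}w$ under the vanishing boundary condition --- which you flag explicitly while the paper uses them silently.
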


\begin{proof}
First, we consider the homogeneous boundary condition $g=0$. To prove the uniqueness result, it's enough to prove
\begin{equation}\label{uniq_ine}
\|u\|_{\hat{H}_{1-\alpha}(\Omega)} \leq C\|f\|_{L^2(\Omega)}. 
\end{equation}
We denote coordinate notation as $\boldsymbol{x}:=(\hat{x}_i,x_i)$, here $\hat{x}_i = (x_1, \cdots, x_{i-1},x_{i+1},\cdots, x_n)$. And the domain with respect to $\hat{x}_i$ is denoted by $\Omega^{\prime}$.
Multiplying both sides of \eqref{frac_prob} with $u$, the integration by parts gives that

$$
\begin{aligned}
&- \int_{\Omega^{\prime}} \int_{\underline{x}_i}^{\overline{x}_i}
\left( \sum_{i=1}^{n}\partial_{x_{i}}\left(p_{i}   J^{\alpha}_{\underline{x}_i}+q_{i} J^{\alpha}_{\overline{x}_i-} \right)\partial_{x_{i}} u \right) u \mathrm{d}x_i\mathrm{d}\hat{x}_i
\\
=&
-\int_{\Omega^{\prime}}
\left( \left. \sum_{i=1}^{n}\left(p_{i}   J^{\alpha}_{\underline{x}_i}+q_{i} J^{\alpha}_{\overline{x}_i-} \right)\partial_{x_{i}} u \right|_{x_i=\underline{x}_i}^{x_i=\overline{x}_i} \right) u \mathrm{d}\hat{x}_i + \int_{\Omega^{\prime}} \int_{\underline{x}_i}^{\overline{x}_i}
\left( \sum_{i=1}^{n}\left(p_{i}   J^{\alpha}_{\underline{x}_i}+q_{i} J^{\alpha}_{\overline{x}_i-} \right)\partial_{x_{i}} u \right)\partial_{x_{i}} u \mathrm{d}x_i\mathrm{d}\hat{x}_i
\\
=& \int_{\Omega^{\prime}} \int_{\underline{x}_i}^{\overline{x}_i}
\left( \sum_{i=1}^{n}\left(p_{i}   J^{\alpha}_{\underline{x}_i}+q_{i} J^{\alpha}_{\overline{x}_i-} \right)\partial_{x_{i}} u \right)\partial_{x_{i}} u \mathrm{d}x_i\mathrm{d}\hat{x}_i \\ 
=&  \int_{\Omega}
fu\mathrm{d}\boldsymbol{x}.
\end{aligned}
$$

If we denote $w:=J^{\alpha}_{\underline{x}_i} \partial_{x_i} u $, we can get $\partial_{x_i} u = \partial_{x_i}^{\alpha}w$. Following the assumption $u\in \hat{H}_1(\Omega)$, we have $w(\cdot, \hat{x}_i)\in H_{\alpha}(\underline{x}_i,\overline{x}_i)$ for fixed $\hat{x}_i$. Then by \cite{KubRys}
$$
\int_{\Omega^{\prime}} \int_{\underline{x}_i}^{\overline{x}_i} p_i J^{\alpha}_{\underline{x}_i} (\partial_{x_i} u)  (\partial_{x_i} u)
\mathrm{d}x_i\mathrm{d}\hat{x}_i=\int_{\Omega^{\prime}} \int_{\underline{x}_i}^{\overline{x}_i} p_i w \cdot \partial_{x_i}^{\alpha} w
\mathrm{d}x_i\mathrm{d}\hat{x}_i \geq C \int_{\Omega^{\prime}} p_i \|w(\cdot, \hat{x}_i)\|^2_{L^2(\underline{x}_i,\overline{x}_i)} \mathrm{d}\hat{x}_i.
$$
By the definition  \ref{Def:R-L_int}, $J^{\alpha}_{\overline{x}_i-}$ is the adjoint of $J^{\alpha}_{\underline{x}_i}$, therefore we get
\begin{equation}\label{uniqu_1}
\begin{aligned}
& \int_{\Omega^{\prime}} \int_{\underline{x}_i}^{\overline{x}_i}
\left( \sum_{i=1}^{n}\left(p_{i}   J^{\alpha}_{\underline{x}_i}+q_{i} J^{\alpha}_{\overline{x}_i-} \right)\partial_{x_{i}} u \right)\partial_{x_{i}} u \mathrm{d}x_i\mathrm{d}\hat{x}_i \\
\geq & C \int_{\Omega^{\prime}} \sum^{n}_{i=1} \left( p_i \|w(\cdot, \hat{x}_i)\|^2_{L^2(\underline{x}_i,\overline{x}_i)} + q_i \|w(\cdot, \hat{x}_i)\|^2_{L^2(\underline{x}_i,\overline{x}_i)} \right)
\mathrm{d}\hat{x}_i
\\
= & C \sum_{i=1}^{n}\int_{\Omega^{\prime}} \|w(\cdot, \hat{x}_i)\|^2_{L^2(\underline{x}_i,\overline{x}_i)}\mathrm{d}\hat{x}_i.
\end{aligned}
\end{equation}
Hereafter, we use the notation $C>0$ to represent generic constants that are independent of the functions being considered but dependent on parameters such as $\alpha$ and $\Omega$.

On the other hand, we have 
\begin{equation}\label{uniqu_2}
\int_{x^{\prime}} \int_{\underline{x}_i}^{\overline{x}_i}
\left( \sum_{i=1}^{n}\left(p_{i}   J^{\alpha}_{\underline{x}_i}+q_{i} J^{\alpha}_{\overline{x}_i-} \right)\partial_{x_{i}} u \right)\partial_{x_{i}} u \mathrm{d}x_i\mathrm{d}\hat{x}_i =  \int_{\Omega}
f(\boldsymbol{x})u(\boldsymbol{x})\mathrm{d}\boldsymbol{x}
\leq \|u\|_{L^2(\Omega)}  \|f\|_{L^2(\Omega)}.
\end{equation}
By Proposition \ref{prop:norm_ine}, we obtain
\begin{equation}\label{uniqu_3}
\begin{aligned}
 \|J^{\alpha}_{\underline{x}_i} \partial_{x_i} u\|^4_{L^2(\Omega)}=&\left(\int_{\Omega^{\prime}} \|J^{\alpha}_{\underline{x}_i} \partial_{x_i} u(\cdot, \hat{x}_i)\|^2_{L^2(\underline{x}_i,\overline{x}_i)}\mathrm{d}\hat{x}_i\right)^2 \\
= & \|J^{\alpha}_{\underline{x}_i} \partial_{x_i} u\|^2_{L^2(\Omega)} \left(\int_{\Omega^{\prime}} \|J^{\alpha}_{\underline{x}_i} \partial_{x_i} u(\cdot, \hat{x}_i)\|^2_{L^2(\underline{x}_i,\overline{x}_i)}\mathrm{d}\hat{x}_i \right) \\
\geq & C \| J^{\alpha}_{\underline{x}_i} \partial_{x_i} u\|^2_{L^2(\Omega)} \left( \int_{\Omega^{\prime}} \| u(\cdot, \hat{x}_i)\|^2_{L^2(\underline{x}_i,\overline{x}_i)}\mathrm{d}\hat{x}_i \right) \\
= & C \|J^{\alpha}_{\underline{x}_i} \partial_{x_i} u\|^2_{L^2(\Omega)} \| u\|^2_{L^2(\Omega)} .
\end{aligned}
\end{equation}
Combining \eqref{uniqu_1} - \eqref{uniqu_3}, we find 
$$
C\left( \sum_{i=1}^{n} \|J^{\alpha}_{\underline{x}_i} \partial_{x_i} u\|^2_{L^2(\Omega)}
 \right) \| u\|^2_{L^2(\Omega)} \leq \|f\|^2_{L^2(\Omega)}\|u\|^2_{L^2(\Omega)}.
$$
If $\|u\|_{L^2(\Omega)}\neq 0$, then we proof
$$ \| u\|_{\hat{H}_{1-\alpha}(\Omega)} \leq C \|f\|_{L^2(\Omega)}.
$$
Thus, we prove \eqref{uniq_ine}. As for the inhomogeneous boundary condition, if there exists two different solutions $u_1$ and $u_2$. We can define $u:=u_1-u_2$. Since the equation is linear, $u$ is the solution for the homogeneous situation. The conclusion is consistent. 
\end{proof}

\section{Fractional weak adversarial networks (f-WANs) framework}\label{Sect:WANf}
 
In this section, we derive the weak formulation of the fractional order equation. Building upon this formulation, we propose a novel weak adversarial network for solving the equation.

\subsection{The weak formulation of the model problem}\label{weak_integrate}

In the case when the source term $f$ in \eqref{frac_prob}-\eqref{frac_oper} is not smooth, the resulting solution $u$ may not belong to the set of $C^2(\Omega)$. This observation serves as the motivation to consider its weak formulation. In view of the assumption that $p_i,q_i$ are all constants, we set $p_i=q_i=\frac{1}{2n}$ for simplicity.
The fractional equation \eqref{frac_prob}-\eqref{frac_oper} can be reformulated as followings 
\begin{equation}\label{strong_form_int}
\begin{aligned}
&-\sum^{n}_{i=1} \frac{\partial}{\partial x_i} \left(  \int_{\underline{x}_i}^{x_i} \frac{1}{\Gamma(\alpha)} (x_i-w)^{\alpha - 1} \frac{\partial u}{\partial w}(w,\hat{x}_i) \mathrm{d}w \right) \\
&- \sum_{i=1}^{n} \frac{\partial}{\partial x_i} \left(  \int_{x_i}^{\overline{x}_i} \frac{1}{\Gamma(\alpha)} (w-x_i)^{\alpha - 1} \frac{\partial u}{\partial w}(w,\hat{x}_i) \mathrm{d}w \right)  = f, \quad (\hat{x}_i, x_i) \in \Omega ,
\end{aligned}
\end{equation}
with boundary condition 
\begin{equation}\label{strong_form_bound}
u = g(\boldsymbol{x}),  \quad  \text{on} \ \partial \Omega.
\end{equation}

Multiplying a test function
$v\in H^1_0(\Omega)$ on both sides of \eqref{strong_form_int}. Here 
the space $H^1_0(\Omega)$, known as the homogeneous Sobolev space, consists of functions whose weak partial derivatives are integrable in the $L^2$ sense over $\Omega$ with vanishing traces on the boundary $\partial \Omega$. Upon integrating by parts, we can obtain a  weak formula for \eqref{strong_form_int}, which is given by
\begin{equation}\label{weak_form_int}
\begin{aligned}
\langle \mathcal{L}[u],v\rangle :=&\sum_{i=1}^{n} \int_\Omega  \left(  \int_{\underline{x}_i}^{x_i} \frac{1}{\Gamma(\alpha)} (x_i-w)^{\alpha - 1} \frac{\partial u}{\partial w}(w, x^{\prime}) \mathrm{d}w \right) \frac{\partial v}{\partial x_i} (x^{\prime},x_i) \mathrm{d} x_i\mathrm{d} x^{\prime}
\\
& +\sum_{i=1}^{n} \int_\Omega  \left(  \int_{x_i}^{\overline{x}_i} \frac{1}{\Gamma(\alpha)} (w-x_i)^{\alpha - 1} \frac{\partial u}{\partial w}(w,x^{\prime}) \mathrm{d}w \right)  \frac{\partial v}{\partial x_i}(x^{\prime},x_i) \mathrm{d} x_i \mathrm{d} x^{\prime}\\
&-\int_{\Omega} f v \mathrm{d}\boldsymbol{x}  = 0.
\end{aligned}
\end{equation}
Meanwhile, we define the following form corresponding to the Dirichlet boundary condition \eqref{strong_form_bound}:
\begin{equation}\label{weak_form_bound}
\mathcal{B}[u] := \left. (u - g)\right|_{\partial \Omega}.
\end{equation}
We also note that in case the when the boundary condition in \eqref{frac_prob} is given in Neumann type, i.e. $\partial_{n}u=g$ on $\partial \Omega$. Then $\mathcal{B}[u]$ can be defined as
$$
\mathcal{B}[u] := \left. (\frac{\partial u}{\partial \vec{n}} - g)\right|_{\partial \Omega}.
$$

\subsection{Induced operator norm minimization}

The above weak formula \eqref{weak_form_int} can induce an operator norm:

\begin{defn}
We define the operator norm $$\| \mathcal{L}[u]\|_{op} := \max_{v\in H^1_0(\Omega), v\neq 0}  \frac{\left|\langle \mathcal{L}[u],v\rangle\right|}{\|v\|_2}     ,$$ where $\|v\|_2=(\int_{\Omega} |v(x)|^2\mathrm{d}x)^{1/2}. $
\end{defn}

This gives the definition of the operator norm of $\mathcal{L}[u]$ induced from $L^2$ norm. Here the linear functional $\mathcal{L}[u]:H^1_0(\Omega) \mapsto \mathbb{R}$ such that $\mathcal{L}[u](v) \triangleq \langle \mathcal{L}[u],v\rangle$ for fractional equations.

\begin{lemma}
Under the assumption that $u\in \hat{H}_{1}(\Omega)$, satisfies the boundary condition \eqref{weak_form_bound} on $\partial \Omega$, then $u$ is the unique weak solution in $\hat{H}_{1}(\Omega)$ of equation \eqref{strong_form_int} if and only if $\| \mathcal{L}[u]\|_{op}=0$.
\end{lemma}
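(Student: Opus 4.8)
The plan is to reduce the biconditional to the definition of the operator norm together with the conditional uniqueness already proved in Theorem \ref{them:uniqu_the}. First I would note that, since $\|v\|_2>0$ for every $v\neq 0$ and the quotient $|\langle\mathcal{L}[u],v\rangle|/\|v\|_2$ is nonnegative, the identity $\|\mathcal{L}[u]\|_{op}=0$ holds if and only if $\langle\mathcal{L}[u],v\rangle=0$ for all $v\in H^1_0(\Omega)$. By the very definition \eqref{weak_form_int} of the functional $\langle\mathcal{L}[u],\cdot\rangle$, this last condition says exactly that $u$ satisfies the weak formulation of \eqref{strong_form_int}; together with the standing hypothesis $\mathcal{B}[u]=0$ on $\partial\Omega$ this is precisely the statement that $u$ is a weak solution in $\hat H_1(\Omega)$. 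Hence $u$ is a weak solution $\iff\|\mathcal{L}[u]\|_{op}=0$, which settles both implications except for the word ``unique''.

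For the forward direction the qualifier ``unique'' is free: if $u$ is \emph{the} unique weak solution then in particular it is a weak solution, and the reduction above gives $\|\mathcal{L}[u]\|_{op}=0$. For the converse, assume $\|\mathcal{L}[u]\|_{op}=0$ and let $\tilde u\in\hat H_1(\Omega)$ be any weak solution with the same Dirichlet data. Then $u-\tilde u\in\hat H_1(\Omega)$ is a weak solution of the homogeneous problem, and the computation in the proof of Theorem \ref{them:uniqu_the} — which only uses the weak identity obtained by testing the equation against the solution itself, integrating by parts, and invoking the coercivity estimate of \cite{KubRys} — applies verbatim to $u-\tilde u$ and yields $\|u-\tilde u\|_{\hat H_{1-\alpha}(\Omega)}=0$. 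Writing $\partial^{1-\alpha}_{x_i}(u-\tilde u)=J^{\alpha}_{\underline{x}_i}\partial_{x_i}(u-\tilde u)$ (legitimate because $u-\tilde u$ has vanishing trace) and applying Proposition \ref{prop:norm_ine} in each coordinate direction upgrades this to $\|u-\tilde u\|_{L^2(\Omega)}=0$, so $u=\tilde u$ in $\hat H_1(\Omega)$; thus $u$ is the unique weak solution.

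The logical skeleton above is short, so the real work — and the main obstacle — is the regularity bookkeeping that makes it rigorous. One must verify that for $u\in\hat H_1(\Omega)$ the pairing in \eqref{weak_form_int} is finite for every $v\in H^1_0(\Omega)$, so that $\mathcal{L}[u]$ is genuinely a bounded linear functional on $H^1_0(\Omega)$ and the extremal quantity defining $\|\cdot\|_{op}$ is well posed (in particular whether the $\max$ in the definition is attained or should be read as a $\sup$); one must also check that the boundary-term integration by parts linking \eqref{strong_form_int} and \eqref{weak_form_int} is valid at this regularity. Finally, it should be confirmed that the coercivity bound borrowed from \cite{KubRys} is available for weak, not merely classical, solutions and that ``uniqueness in the $\hat H_{1-\alpha}$-sense'' indeed upgrades to genuine equality in $\hat H_1(\Omega)$ through Proposition \ref{prop:norm_ine} as indicated.
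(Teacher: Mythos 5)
Your proposal is correct and follows essentially the same route the paper intends: the paper's entire ``proof'' is the single sentence that one applies Theorem~\ref{them:uniqu_the} and then argues as in \cite[Theorem~1]{ZanBao}, which is exactly your reduction (zero operator norm $\iff$ the weak identity holds for every test function, with uniqueness supplied by the energy estimate of Theorem~\ref{them:uniqu_the} applied to the difference of two solutions, upgraded to $L^2$ equality via Proposition~\ref{prop:norm_ine}). You actually supply more detail than the paper does, and the regularity caveats you flag at the end are real gaps that the paper leaves unaddressed as well.
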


After applying Theorem \ref{them:uniqu_the}, the subsequent steps of the proof follow a similar approach to the one presented in \cite[Theorem~1]{ZanBao}.

By the above Lemma, since $\| \mathcal{L}[u]\|_{op}\ge 0$ for any $u\in \hat{H}_{1}(\Omega)$, $\|\mathcal{L}[u]\|_{op}$ achieves its minimum over $\hat{H}_1(\Omega)$ when $u$ is the weak solution of \eqref{strong_form_int}.
Based on the above analysis, we can formulate the following minimax problem:
\begin{equation}\label{minimax}
\min_{u\in \hat{H}_{1}} \| \mathcal{L}[u]\|_{op}= \min_{u\in \hat{H}_{1}} \max_{v\in H^1_0} \frac{|\langle \mathcal{L}[u],v\rangle|^2}{\|v\|^2_2}.
\end{equation}

In the next section, we will propose a neural network to find the optimal solution $u$ for the minimax problem \eqref{minimax}.

\subsection{Weak adversarial network framework}\label{Sect:We_ad}
We parameterize \eqref{minimax} using neural networks. Let $u_\theta:\mathbb{R}^d \mapsto \mathbb{R}$ and $v_\eta :\mathbb{R}^d \mapsto \mathbb{R}$ denote the parameterization of $u$ and $v$, respectively, where $\theta$ and $\eta$ are the trainable model weights. Then we can express the minimax problem as follows:
\begin{equation}\label{minimax_para}
\min_{u_\theta \in \hat{H}_{1,1}} \max_{v_\eta \in H^1_0} \frac{|\langle \mathcal{L}[u_\theta],v_\eta \rangle|^2}{\|v_\eta\|^2_2} .
\end{equation}

During the learning process of the networks, we first fix $\eta$ and optimize $\theta$ to minimize \eqref{minimax_para}. Once we acquire the optimal $\theta$, we fix it and optimize $\eta$ to challenge $\theta$ and maximize \eqref{minimax_para}. The neural network approximated solution is obtained after steps of loop iterations.
The schematic of the WAN methods is shown in Figure \ref{wan}.
\begin{figure}[!hbt]
	\centering
	\centering
	\includegraphics[width=4in]{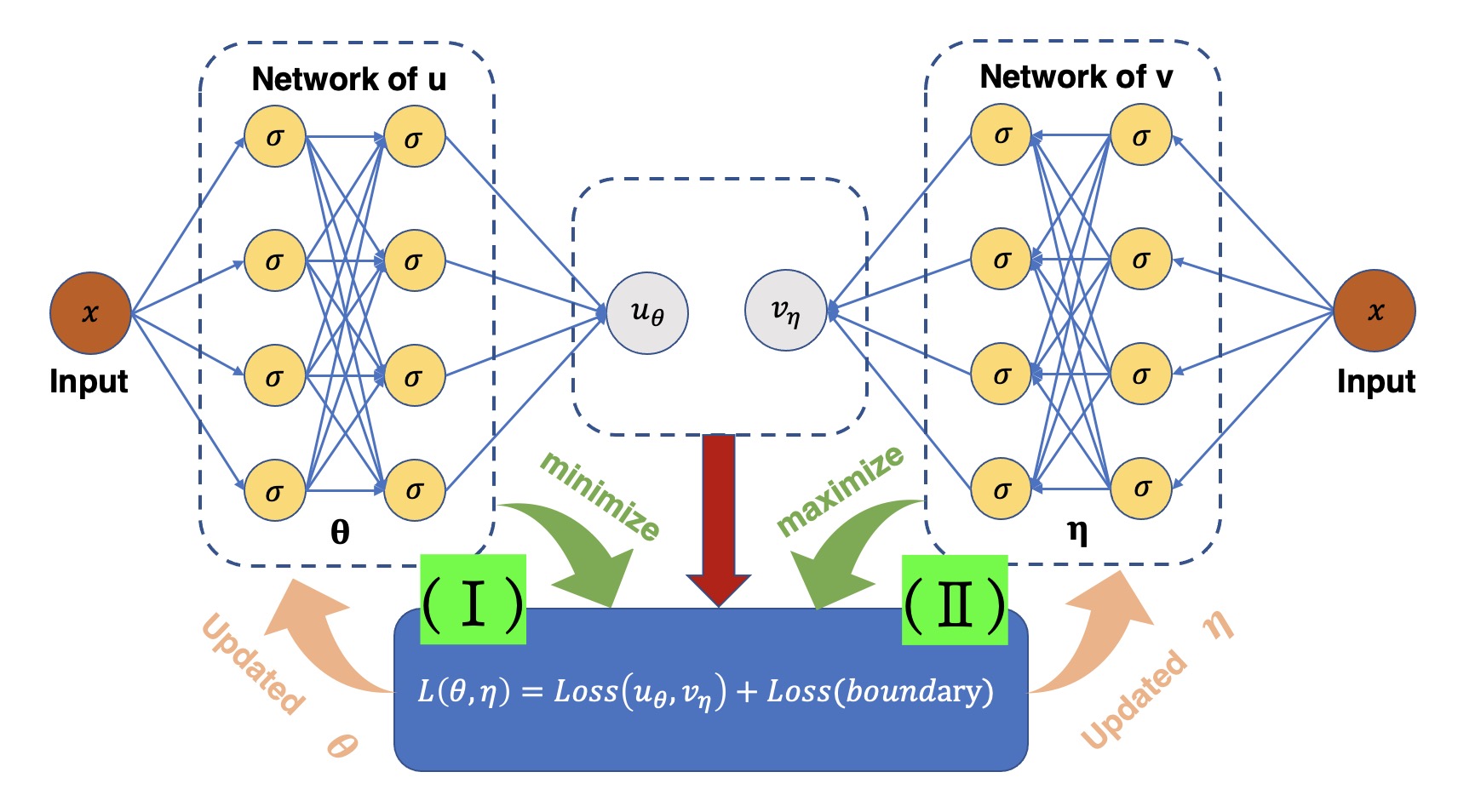}
	\caption{Schematic of the WAN for solving fractional partial differential equations.}
	\label{wan}
\end{figure}
In the interior of $\Omega$, the objective function of $u_\theta$ and $v_\eta$ is $$
L_{\mathrm{int}}(\theta, \eta) \triangleq  |\langle \mathcal{L}[u_\theta],v_\eta\rangle|^2 / \|v_\eta\|^2_2.
$$
In the meantime, the weak solution on the boundary $\partial \Omega$ must satisfy the boundary condition \eqref{weak_form_bound}. Therefore, the objective function is given by:
$$
L_{\mathrm{bound}}(\theta) \triangleq \left|u_{\theta}-g\right|^2.
$$
To sum up, we combine the two objective functions of the interior and the boundary to obtain the total objective function, given by:
\begin{equation}\label{minimax_objective}
\min_\theta \max_\eta L(\theta, \eta),\quad \text{where}\quad L(\theta, \eta) \triangleq L_{\mathrm{int}}(\theta, \eta) + \beta L_{\mathrm{bound}}(\theta),
\end{equation}
with $\beta$  is a regularization parameter that balances the relative importance of the interior and boundary terms.

\subsection{Stochastic approximation of operators and training algorithm}\label{Sect:sec_sto_app}

Unlike the outer integral, which is over the whole domain, the inner integral has a changeable upper limit that depends on the value of the outer integral.
In order to discretize the inner integral, we begin by discretizing the outer integral and then generate points within the domain corresponding to the current value of the outer integral using Monte Carlo sampling. Following this, we employ the same method to approximate the inner integral based on the generated points.

To illustrate the steps of the algorithm, without loss of generality, we consider the model problem in two dimensional case. In particular when $n=2$, we write $\mathcal{L} u =f$ as
\begin{equation}\label{frac_adve}
    -\partial_x \left(p_1 J^{\alpha}_{x_l}+q_1 J^{\alpha}_{x_u-} \right) \partial_x u 
-\partial_y \left( p_2 J^{\alpha}_{y_l}+q_2 J^{\alpha}_{y_u-} \right) \partial_y u=f,
\end{equation}
where to simplify the notation we use $x,y$ to denote the variables in the two dimensions and subscript ${\cdot}_l$ and ${\cdot}_u$ for the constants related to the lower and upper boundaries. The parameterized weak formula is given by
\begin{equation}\label{weak_form_network}
\begin{aligned}
\langle \mathcal{L}[u_\theta],v_\eta \rangle =   &\int_{\Omega} \left(  \int_{x_l}^{x} \frac{1}{\Gamma(\alpha)} (x-w)^{\alpha - 1} \frac{\partial u_\theta}{\partial w}(w, y) \mathrm{d}w \right) \frac{\partial v_\eta}{\partial x} (x,y) \mathrm{d}x\mathrm{d}y\\
& + \int_{\Omega} \left(  \int_{x}^{x_u} \frac{1}{\Gamma(\alpha)} (w-x)^{\alpha - 1} \frac{\partial u_\theta}{\partial w}(w, y) \mathrm{d}w \right) \frac{\partial v_\eta}{\partial x} (x,y) \mathrm{d}x\mathrm{d}y \\
&+\int_{\Omega}   \left(  \int_{y_l}^{y} \frac{1}{\Gamma(\alpha)} (y-w)^{\alpha - 1} \frac{\partial u_\theta}{\partial w}(x,w) \mathrm{d}w \right) \frac{\partial v_\eta}{\partial y} (x,y) \mathrm{d}x \mathrm{d}y \\ 
&+\int_{\Omega}   \left(  \int_{y}^{y_u} \frac{1}{\Gamma(\alpha)}(w-y)^{\alpha - 1} \frac{\partial u_\theta}{\partial w}(x,w) \mathrm{d}w \right) \frac{\partial v_\eta}{\partial y} (x,y) \mathrm{d}x \mathrm{d}y \\
&-\int_{\Omega} f v_\eta(x,y) \mathrm{d}x \mathrm{d}y .
\end{aligned}
\end{equation}
Many traditional methods have been proven effective in dealing with singular integrals. In this paper,  in order to  avoid the ``curse of dimensionality",   the Monte Carlo method is employed. 
Traditional meshing requires an exponentially increasing number of nodes as the dimensionality $d$ increases, resulting in high computational costs. However, with the emergence of neural networks, we can conduct high-dimensional numerical experiments at a low cost, even on personal computers.

We denote $\{ (x_i, y_i)  \}_{i=0}^{M_I}$ for the collocation points in the interior domain of $\Omega$ and $\{ (x_i, y_i)  \}_{i=0}^{M_B}$ for the collocation points on the boundary $\partial \Omega$. 
Then the fifth term of the right hand side of the equation (\ref{weak_form_network}) can be handled by
$$
RHS_5 = \int_\Omega f v_\eta(x,y) \mathrm{d}x \mathrm{d}y \approx \frac{1}{M_I} \sum^{M_I}_{i=1} fv_\eta(x_i,y_i).
$$
However the first four remaining terms on the right-hand side require more attention because they involve integrals with limits that depend on the values of the outer integral variables $x$ and $y$.  To be more specific, when the outer layer is discretized into $x_i$ (or $y_i$), the inner upper limit of the integral will change along with $x_i$ (or $y_i$) in the outer layer. 
To differentiate between the discrete $x_i$ (or $y_i$) values in various intervals, we introduce the notation $x_i^l$ (or $y_i^l$) for the lower interval and $x_i^u$ (or $y_i^u$) for the upper interval.
For the sake of convenience, we will only consider the first and second terms of equation (\ref{weak_form_network}):
\begin{equation}\label{RHS_1_2}
\begin{aligned}
RHS_{1,2} = & \int_\Omega   \left(  \int_{x_l}^{x} \frac{1}{\Gamma(\alpha)} (x-w)^{\alpha - 1} \frac{\partial u_\theta}{\partial w}(w, y) \mathrm{d}w \right) \frac{\partial v_\eta}{\partial x} (x,y) \mathrm{d}x\mathrm{d}y \\
&+ \int_\Omega   \left(  \int_x^{x_u} \frac{1}{\Gamma(\alpha)} (w-x)^{\alpha - 1} \frac{\partial u_\theta}{\partial w}(w, y) \mathrm{d}w \right) \frac{\partial v_\eta}{\partial x} (x,y) \mathrm{d}x\mathrm{d}y \\ 
\approx& \sum_{i=1}^{M_I} \frac{1}{M_I}\left( \int_{x_l}^{x_i^l} \frac{1}{\Gamma(\alpha)} (x_i^l-w)^{\alpha - 1} \frac{\partial u_\theta}{\partial w}(w, y_i^l)\mathrm{d}w \right) \frac{\partial v_\eta}{\partial x} (x_i^l,y_i^l) \\
&+ \sum_{i=1}^{M_I} \frac{1}{M_I}\left( \int_{x_i^u}^{x_u} \frac{1}{\Gamma(\alpha)} (w-x_i^u)^{\alpha - 1} \frac{\partial u_\theta}{\partial w}(w,y_i^u) \mathrm{d}w \right) \frac{\partial v_\eta}{\partial x} (x_i^u,y_i^u) \\
\approx & \sum_{i=1}^{M_I}  \sum_{j=1}^{N} \frac{1}{M_I \cdot N} \frac{1}{\Gamma(\alpha)} (x_i^l-w_j^{x_l})^{\alpha - 1} \frac{\partial u_\theta}{\partial w}(w_j^{x_l},y_i^l) \frac{\partial v_\eta}{\partial x} (x_i^l,y_i^l) \\
& + \sum_{i=1}^{M_I}  \sum_{j=1}^{N} \frac{1}{M_I \cdot N} \frac{1}{\Gamma(\alpha)} (w_j^{x_u}-x_i^u)^{\alpha - 1} \frac{\partial u_\theta}{\partial w}(w_j^{x_u}, y_i^u) \frac{\partial v_\eta}{\partial x} (x_i^u,y_i^u),
\end{aligned}
\end{equation}
here $w_j^{x_l}$ (or $ w_j^{x_u}), j=1,\cdots, N$ means the collocation points in the range of $[x_l,x_i^l]$ (or $[x_i^u,x_u])$ for $i=1,\cdots,M_I$.
Finally, we arrive at the following approximation for the weak form:
$$
\begin{aligned}
\langle \mathcal{L}[u_\theta],v_\eta \rangle \approx &
\sum_{i=1}^{M_I}  \sum_{j=1}^{N} \frac{1}{M_I \cdot N} \frac{1}{\Gamma(\alpha)} \left( (x_i^l-w_j^{x_l})^{\alpha - 1} \frac{\partial u_\theta}{\partial w}(w_j^{x_l},y_i^l) \frac{\partial v_\eta}{\partial x} (x_i^l,y_i^l) \right. \\ &
\left. + (w_j^{x_u}-x_i^u)^{\alpha - 1} \frac{\partial u_\theta}{\partial w}(w_j^{x_u}, y_i^u) \frac{\partial v_\eta}{\partial x} (x_i^u,y_i^u)
\right)
 \\
& + \sum_{i=1}^{M_I}  \sum_{j=1}^{N} \frac{1}{M_I \cdot N} \frac{1}{\Gamma(\alpha)} \left( (y_i^l-w_j^{y_l})^{\alpha - 1} \frac{\partial u_\theta}{\partial w}(x_i^l,w_j^{y_l}) \frac{\partial v_\eta}{\partial x} (x_i^l,y_i^l) \right. \\ &
\left. + (w_j^{y_u}-y_i^u)^{\alpha - 1} \frac{\partial u_\theta}{\partial w}(x_i^u,w_j^{y_u}) \frac{\partial v_\eta}{\partial x} (x_i^u,y_i^u)
\right) - \frac{1}{M_I} \sum^{M_I}_{i=1} fv_\eta(x_i,y_i).
\end{aligned}
$$
And the loss function on the boundary $\partial \Omega$ is defined by 
$$
L_{\mathrm{bound}} = \sum_{i=1}^{M_B} \frac{1}{M_B} \left|u_{\theta}(x_i, y_i)-g(x_i,y_i)\right|^2. 
$$ 
Based on the stochastic approximations of the interior and boundary objective functions discussed above, we can ultimately obtain the total objective function
\begin{equation}\label{total_objective}
L(\theta,\eta) := L_{int} + \beta L_{bound} = |\langle \mathcal{L}[u_\theta],v_\eta \rangle| / \|v_\eta\|_2^2 + \beta \sum_{i=1}^{M_B} \frac{1}{M_B} \left|u_{\theta}(x_i,y_i)-g(x_i,y_i)\right|^2.    
\end{equation}
While looking for the saddle point of formula \eqref{total_objective}, we use TensorFlow \cite{AbaBar} to automatically calculate $\nabla_\theta L(\theta,\eta)$ and $\nabla_\eta L(\theta,\eta)$. The resulting algorithm is outline in Algorithm \ref{Algo:alg_wan}.

\begin{algorithm}[!hbt]
\caption{Weak adversarial network (WAN) for solving fractional differential equations} 

\begin{flushleft}
{\bf Input:} 

$\Omega:$ domain; 
$\alpha:$ fractional order; 
$M_I/M_B:$ number of collocation points in the domain or on the boundary; 
$N:$ number of collocation points in the integrals in fractional equation; 
$K_u/K_v:$ number of solution/adversarial network parameter updates per iteration;
$\tau_\theta:$ learning rate for network parameter $\theta$ of weak solution $u_\theta$;
$\tau_\eta:$ learning rate for network parameter $\eta$ of test function $v_\eta$.

{\bf Initialize:} parameters $\theta,\eta$ in network architecture $u_{\theta}$ and $v_{\eta}$.
\end{flushleft}
\begin{algorithmic}
\WHILE{not converged}
\STATE Sample collocation points 

$\{ (x_i^l,y_i^l) \in [x_l,x_i^l]: i \in [M_I] \}$, $\{ w_j^{x_l} \in [x_l,x_i^l]: j \in [N] \}$, 

$\{ (x_i^u,y_i^u) \in [x_i^u,x_u]: i \in [M_I] \}$, $\{ w_j^{x_u} \in [x_i^u,x_u]: j \in [N] \}$, 

$\{ (x_i^l,y_i^l) \in [y_l,y_i^l]: i \in [M_I] \}$, $\{ w_j^{y_l} \in [y_l,y_i^l]: j \in [N] \}$, 

$\{ (x_i^u,y_i^u) \in [y_l,y_i^l]: i \in [M_I] \}$, $\{ w_j^{y_l} \in [y_i^u,y_u]: j \in [N] \}$, 

and
$\{ (x_i,y_i) \in \partial \Omega: i \in [M_B] \}$ .
\STATE Update weak solution network parameter
	\FOR{$k=1,\cdots,K_u$ }
		\STATE  Using automatic  differential technique to calculate $\nabla_\theta L$;
		\STATE Update $\theta \leftarrow \theta - \tau_\theta \nabla_\theta L$.
	\ENDFOR
\STATE Update test function network parameter
	\FOR{$k=1,\cdots,K_v$ }
		\STATE  Using automatic differential technique to calculate $\nabla_\eta L$;
		\STATE Update $\eta \leftarrow \eta + \tau_\eta \nabla_\eta L$.
	\ENDFOR
\ENDWHILE
\end{algorithmic}
\begin{flushleft}
{\bf Output:} The weak solution $u_\theta$.
\end{flushleft}
\label{Algo:alg_wan}
\end{algorithm}

\section{Numerical experiments} \label{Sect:NumerExper}

\subsection{Experiment setup}

Recall that we have introduced two networks in Section \ref{Sect:We_ad} to approximate the weak solution $u$ and test function $v$, namely $u_\theta$ and $v_\eta$. 
All neural networks and algorithm presented in this paper are implemented using TensorFlow. Unless stated otherwise, we use Adam \cite{KingBa} as the optimizer with a step size 0.0015 for $u_\theta$ and 0.04 for $v_\eta$. The parameters of the networks are initialized randomly according to TensorFlow's default procedure.

For network $v_\eta$, we adopt a  structure that  consists of 6 hidden layers, each with 50 neurons. The activation function of the first two layers is $\tanh$, while the activation function for even layers is softplus and for odd layers is $\sinh$. The last output layer does not have an activation function. 
The structure of network $u_\theta$ differs slightly form the fully-connected feed forward networks used in most cases. We add a convolutional layer to increase the expressiveness of the neural network and reduce the number of neurons in other layers. Thus, $u_\theta$ has 6 hidden layers, each with 20 neurons. The activation functions of $u_\theta$ are $\tanh$ for the first two layers, and softplus for even layers and $\sinh$ for odd layers. We add a convolutional layer in the fifth layer.

\subsection{Experimental results}

\subsubsection{The solution of the 2-dimensional fractional equation exhibits smoothness}\label{Exm_x2}

We start with fractional equation with $d=2$ in (\ref{strong_form_int}) along with boundary condition (\ref{strong_form_bound}). For convenience, we denote equation (\ref{strong_form_int}) as $\mathcal{D}^\alpha_x u=f$.
where $\Omega=(0,1)^2$.
The exact solution is given by
$u(x,y)=x^2y^2$ and the right-handed side of the equation can be calculated accordingly.
The hyper-parameters are given by $M_I=2500,M_B=400,N=50,K_u=1,K_v=1,\tau_\theta=0.0015,\tau_\eta=0.04,\beta=1000000$.
And in Table \ref{Tab:table_para}, we give  the reference of notations used in the algorithm. 
\begin{table}[]
\caption{List of algorithm parameters.}
\label{Tab:table_para}
\begin{tabular}{c|l}
\hline
\textbf{Notation} & \textbf{Definition}                                                                \\ \hline
$d$               & Dimension of $\Omega \subset \mathbb{R}^d$                                         \\
$M_I$             & Number of collocation points in the interior of $\Omega$                           \\
$M_B$             & Number of collocation points on the boundary $\partial \Omega$                     \\
$N$               & Number of collocation points in the integrals in fractional equation               \\
$\alpha$          & The fractional order                                                               \\
$\beta$           & Weight parameter of boundary loss $L_{bound}$                                      \\
$\tau_\theta$     & Learning rate for network parameter $\theta$ of weak solution $u_\theta$           \\
$\tau_\eta$      & Learning rate for network parameter $\eta$ of test function $v_\eta$               \\
$K_u$             & Number of inner iteration that solution network parameter updates per iteration    \\
$K_v$             & Number of inner iteration that adversarial network parameter updates per iteration \\ \hline
\end{tabular}
\end{table}
In our first numerical experiment, we evaluate the feasibility of the proposed f-WANs method for solving fractional differential equations by applying Algorithm \ref{Algo:alg_wan} for 2000 iterations with different fractional order values $\alpha$. The true solution of $u$ is shown in Figure \ref{2d_true}. Specifically, we choose $\alpha=0.3,0.6,0.9$  to demonstrate the effectiveness of the proposed method. It is worth noting that for the inner and outer integral formulations in  \eqref{RHS_1_2}, we sample the data points using different methods. For the outer definite integral, we generate data points with a uniform distribution. However, for the inner integral, we divide the interval into $N$ sub-intervals of equal width.
To clarify, let's take the one-dimensional case as an example. We divide $[0,x_i]$ and $[x_i,1]$  into $[0,\frac{1}{N}x_i,\cdots,\frac{N-1}{N}x_i,x_i]$ and $[x_i,\frac{1+(N-1)x_i}{N},\cdots,\frac{N-1+x_i}{N},1]$) for $i \in [1, M_I]$. This approach can alleviate the randomness and instability of the whole neural network system, despite the emergence of singular integrals that may cause larger errors. Nonetheless, the overall relative error is approximately $3\%$, which demonstrates the feasibility of the proposed method.
The trend in Figure \ref{Fig:2d_pred} shows that the error increases as the value of $\alpha$ decreases.  This is consistent with our expectation since smaller   $\alpha$ corresponds to higher singularities in the solution of \eqref{strong_form_int}. Then leading to larger discretization errors. 

 \begin{figure}[H]
	\centering
\includegraphics[width=0.33\textwidth]{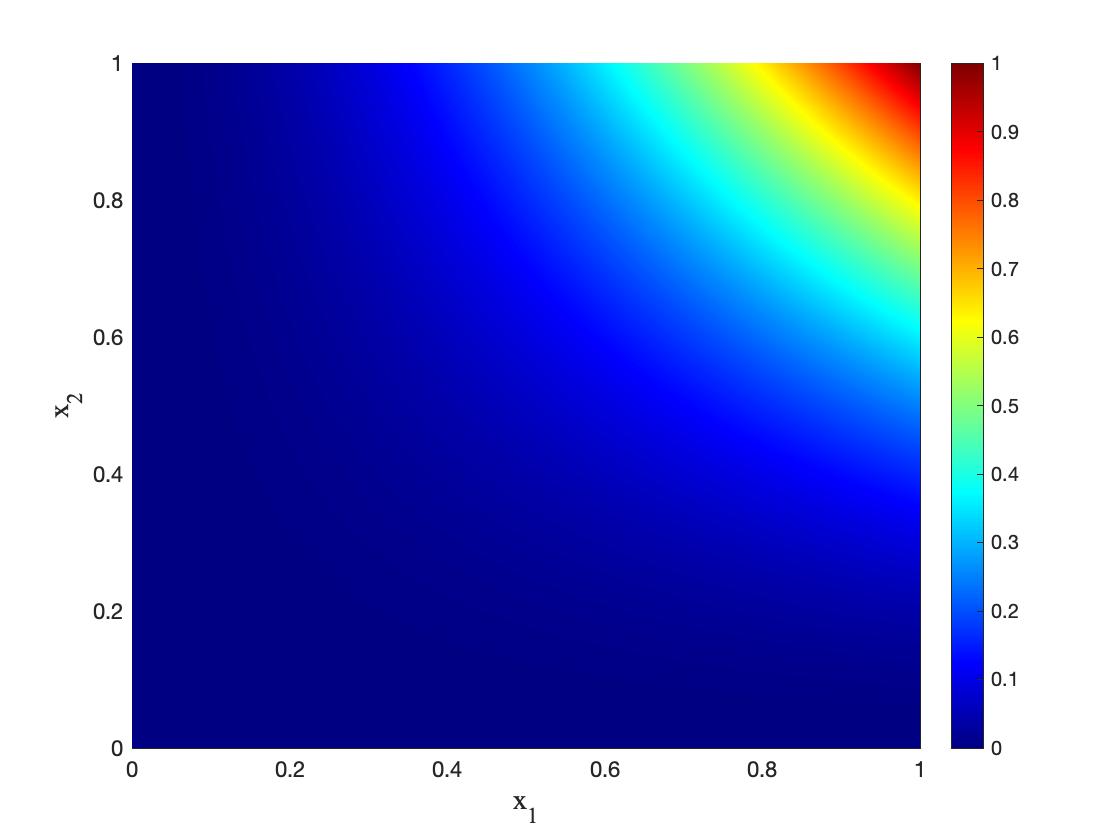}
	\caption{True solution of $u(x,y) = x^2y^2$.}
	\label{2d_true}
\end{figure}

\begin{figure}[H]
\centering
\subfigure[prediction while $\alpha=0.3$]{
\begin{minipage}[t]{0.25\linewidth}
\centering
\includegraphics[width=1.14\textwidth]{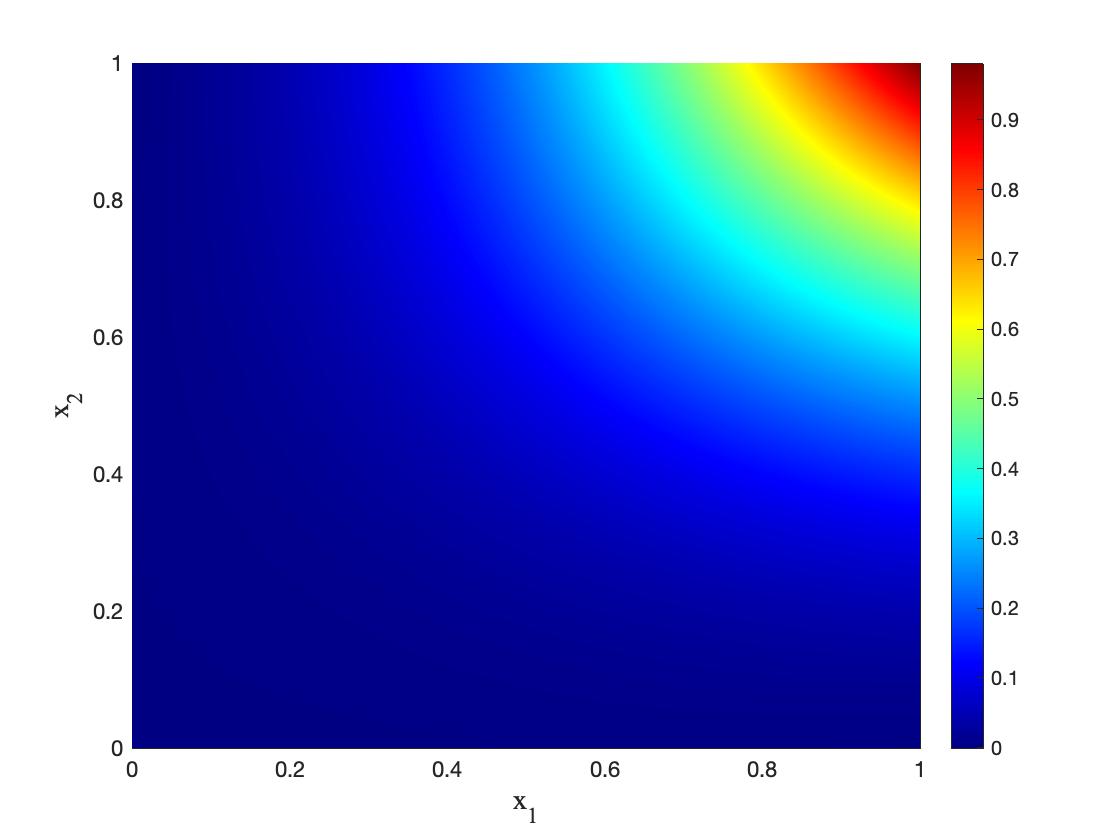}
\end{minipage}
}
\subfigure[prediction while $\alpha=0.6$]{
\begin{minipage}[t]{0.25\linewidth}
\centering
\includegraphics[width=1.14\textwidth]{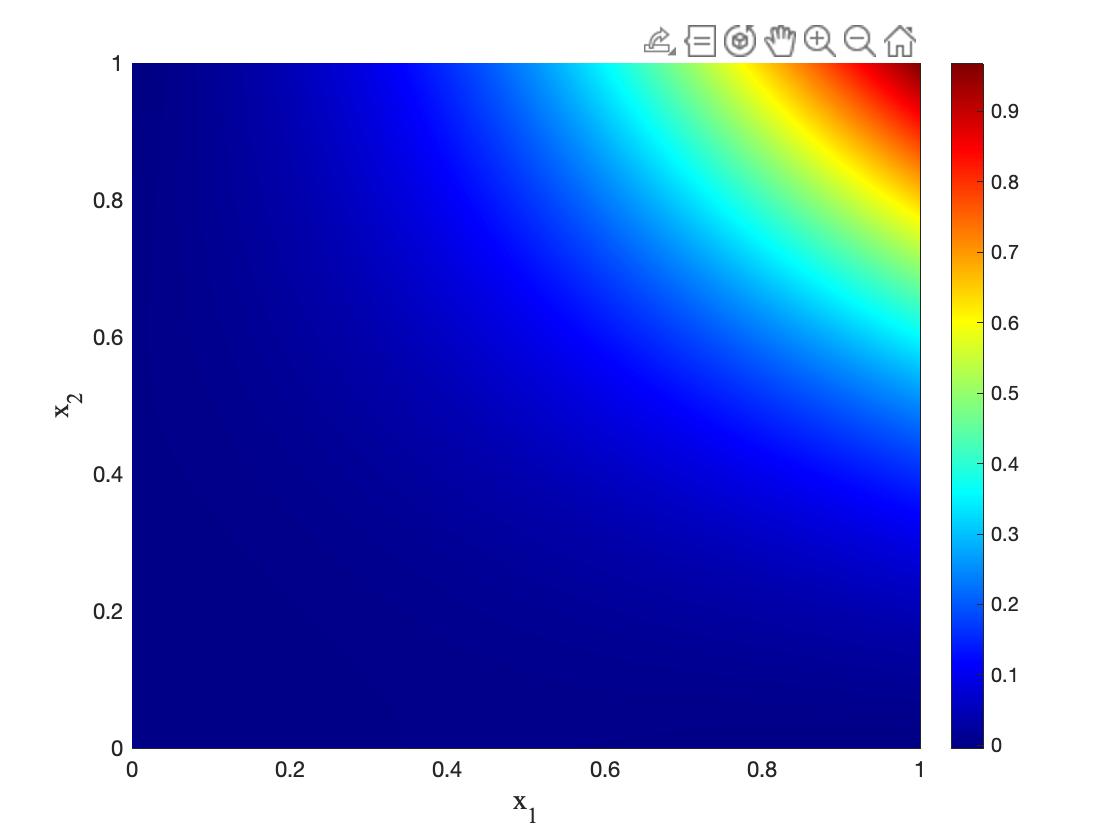}
\end{minipage}
}
\subfigure[prediction while $\alpha=0.9$]{
\begin{minipage}[t]{0.25\linewidth}
\centering
\includegraphics[width=1.14\textwidth]{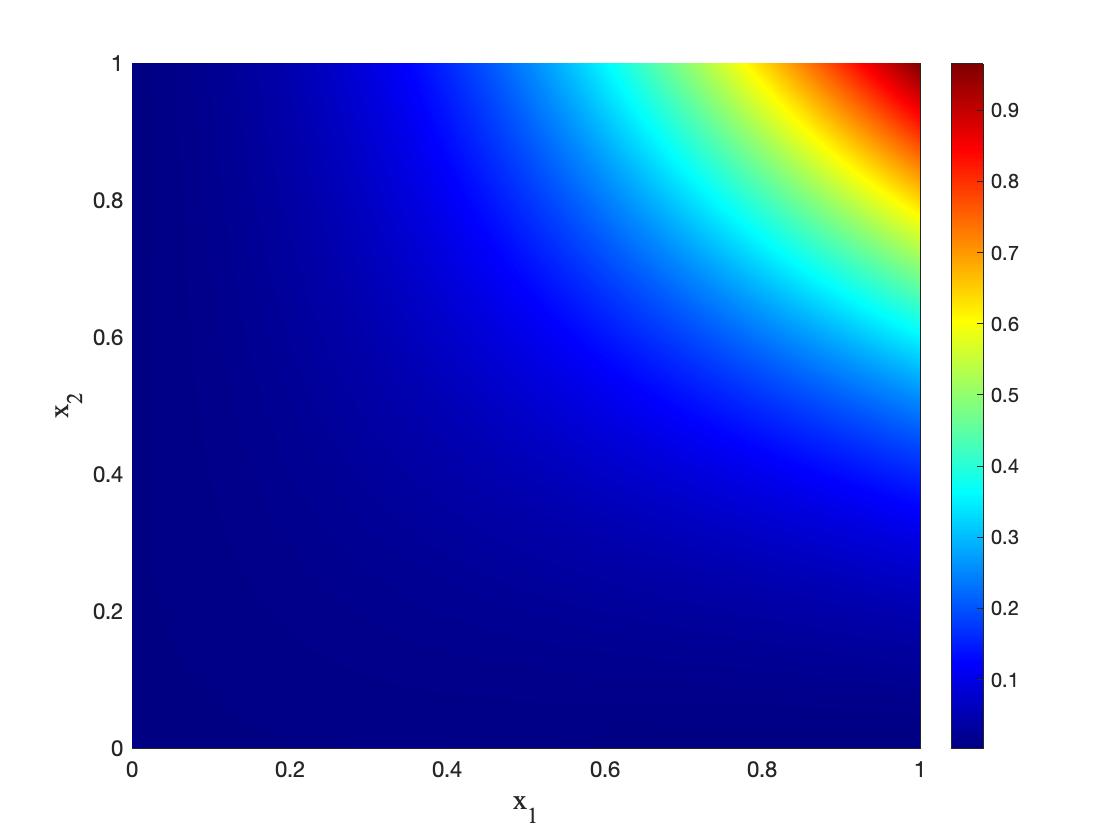}
\end{minipage}
}
                 
\subfigure[difference while $\alpha=0.3$]{
\begin{minipage}[t]{0.25\linewidth}
\centering
\includegraphics[width=1.14\textwidth]{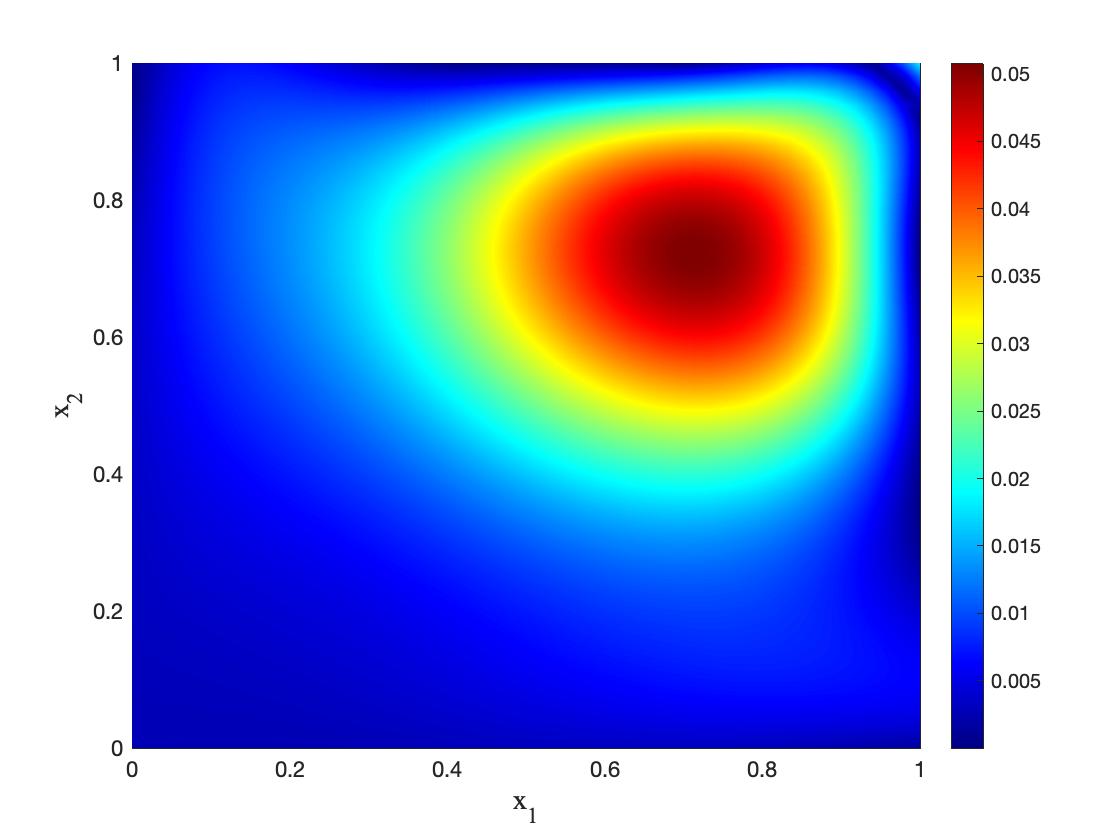}
\end{minipage}
}
\subfigure[difference while $\alpha=0.6$]{
\begin{minipage}[t]{0.25\linewidth}
\centering
\includegraphics[width=1.14\textwidth]{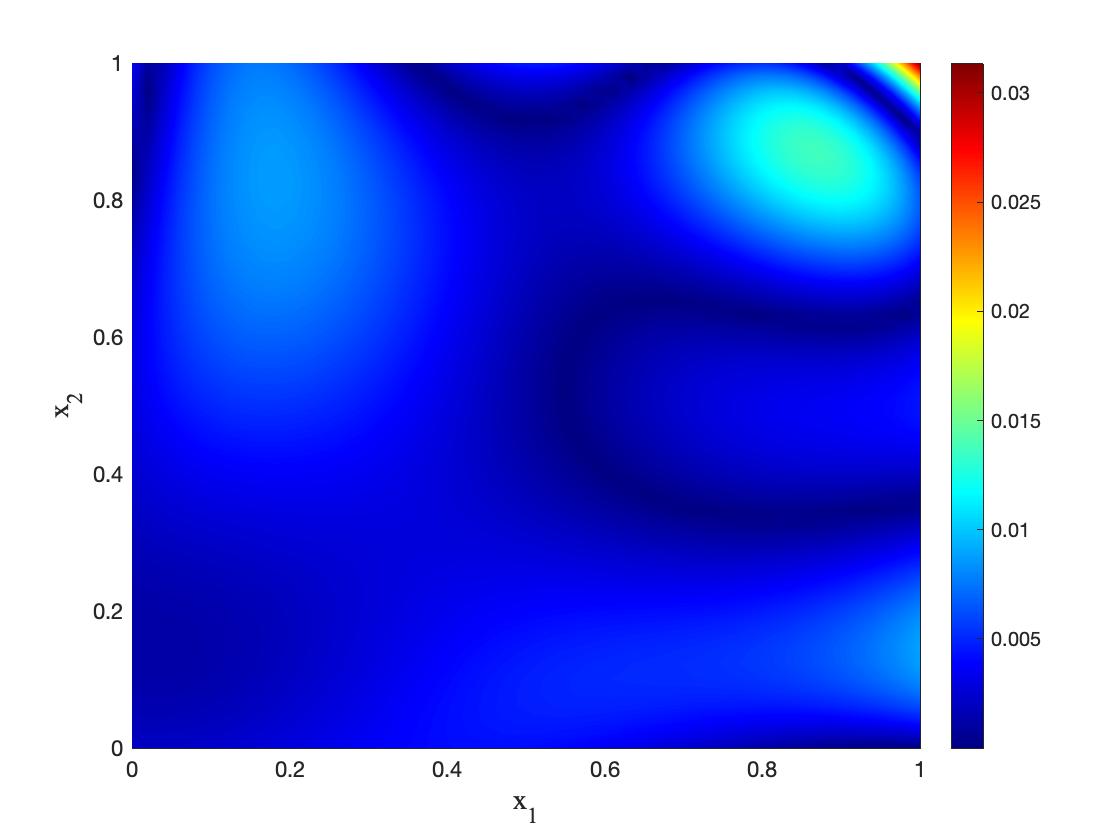}
\end{minipage}
}
\subfigure[difference while $\alpha=0.9$]{
\begin{minipage}[t]{0.25\linewidth}
\centering
\includegraphics[width=1.14\textwidth]{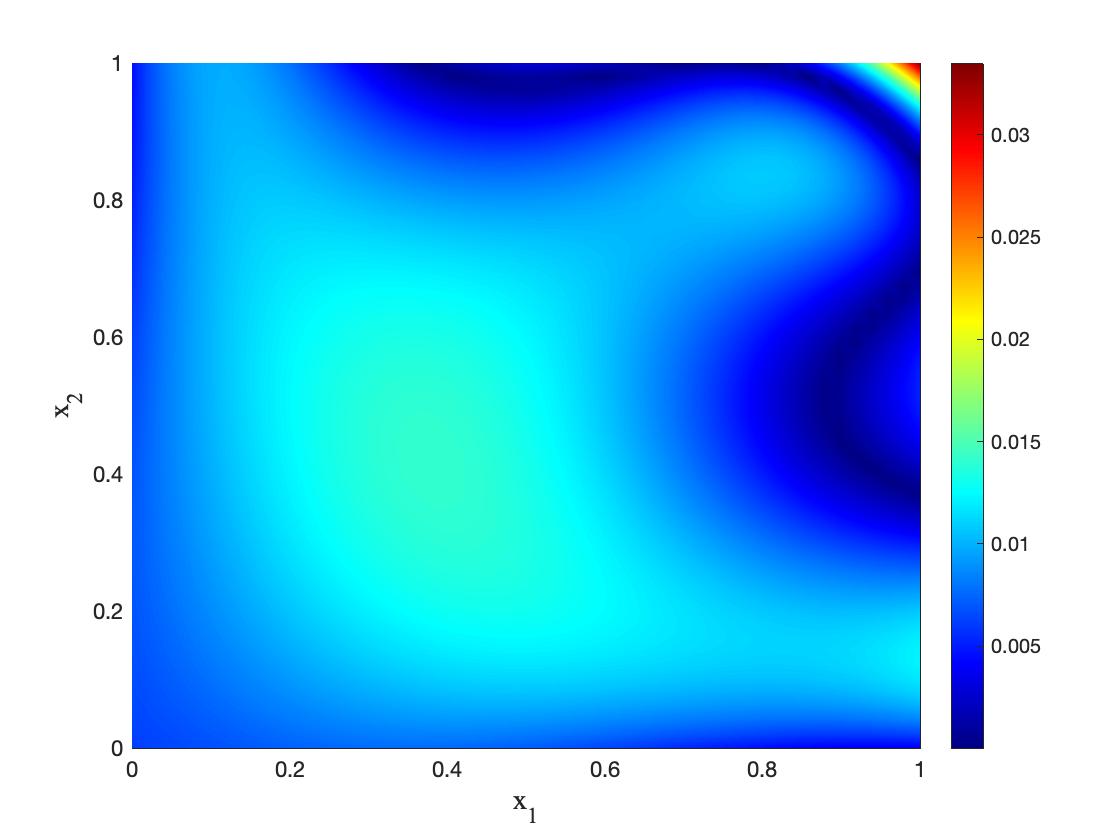}
\end{minipage}
}
\centering
\caption{The first row displays the predicted values of the WAN for $\alpha=0.3,0.6$ and $0.9$ respectively, from left to right. The second row shows the corresponding errors   $|u_{\text{prediction}} - u_{\text{true}}|$.
}
\label{Fig:2d_pred}
\end{figure}

\subsubsection{The solution of the 2-dimensional fractional equation exhibits smoothness with noise}\label{ex:noise}

To investigate the robustness of the WAN with respect to the choice of $\alpha$, Other data is the same as in \ref{Exm_x2} except for a small random perturbation to the Dirichlet value $g$ on the boundary $\partial \Omega$. 
More precisely, we define $g^{\delta}=g+\delta \max\{g\} \xi$,  where $\xi$ is a Gaussian random distribution with zero mean and unit variance. Here the noise level $\delta$ is set to be $5\%$.
To ensure fairness, we set the same random seed to generate $\xi$ and add noise to the Dirichlet values $g$ when solving the equations corresponding to different $\alpha$.
The results are presented in Figure \ref{Fig:2d_pred_with_noise}. In the first row of Figure \ref{Fig:2d_pred_with_noise}, we have highlighted the contour line of 0 in the lower left corner of the image.
As we perturb the Dirichlet value of the boundary, $g$ is not exactly equal to 0 on the left and lower sides of $\partial \Omega$.  As a result, the predictions obtained by the neural network have a small disturbance in the lower left corner. However, they are still very close to 0 in value, and there is no significant difference in magnitude on the colorbar. Furthermore, we have also considered different levels of noise, and the results show that the error increases as the noise level increases.

\begin{figure}[H]
\centering
\subfigure[prediction while $\alpha=0.3$]{
\begin{minipage}[t]{0.25\linewidth}
\centering
\includegraphics[width=1.14\textwidth]{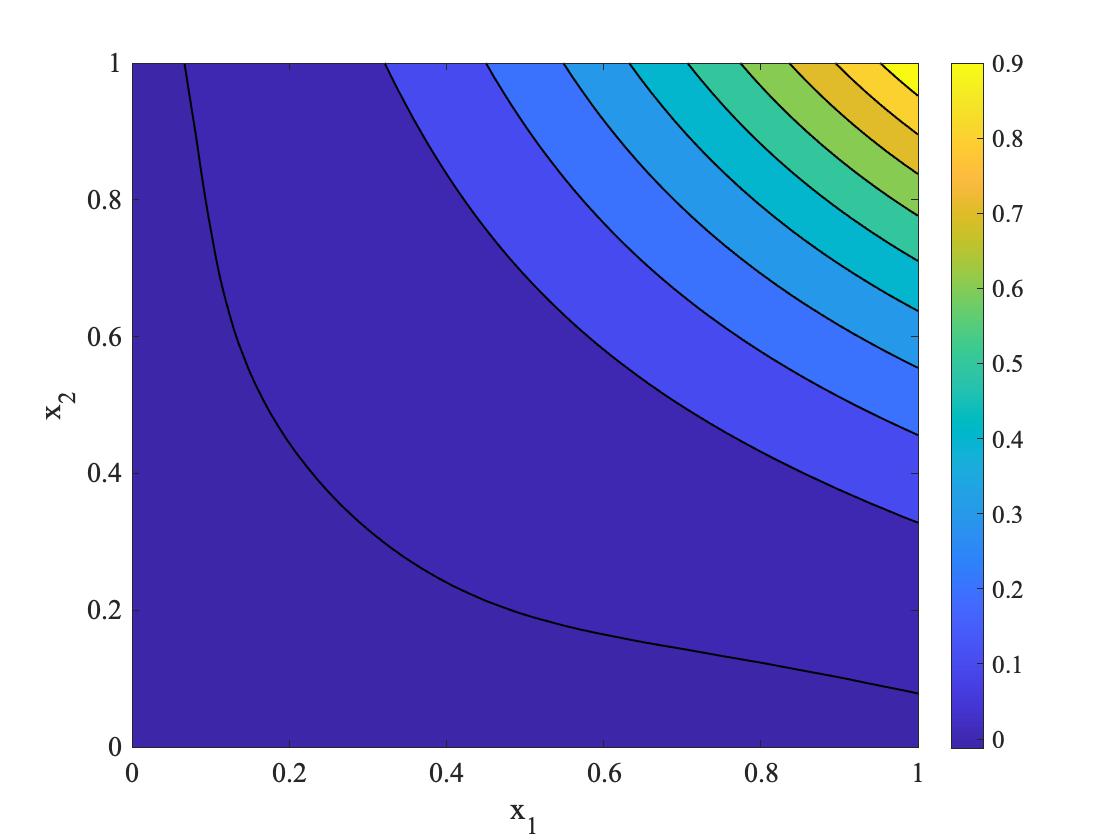}
\end{minipage}
}
\subfigure[prediction while $\alpha=0.6$]{
\begin{minipage}[t]{0.25\linewidth}
\centering
\includegraphics[width=1.14\textwidth]{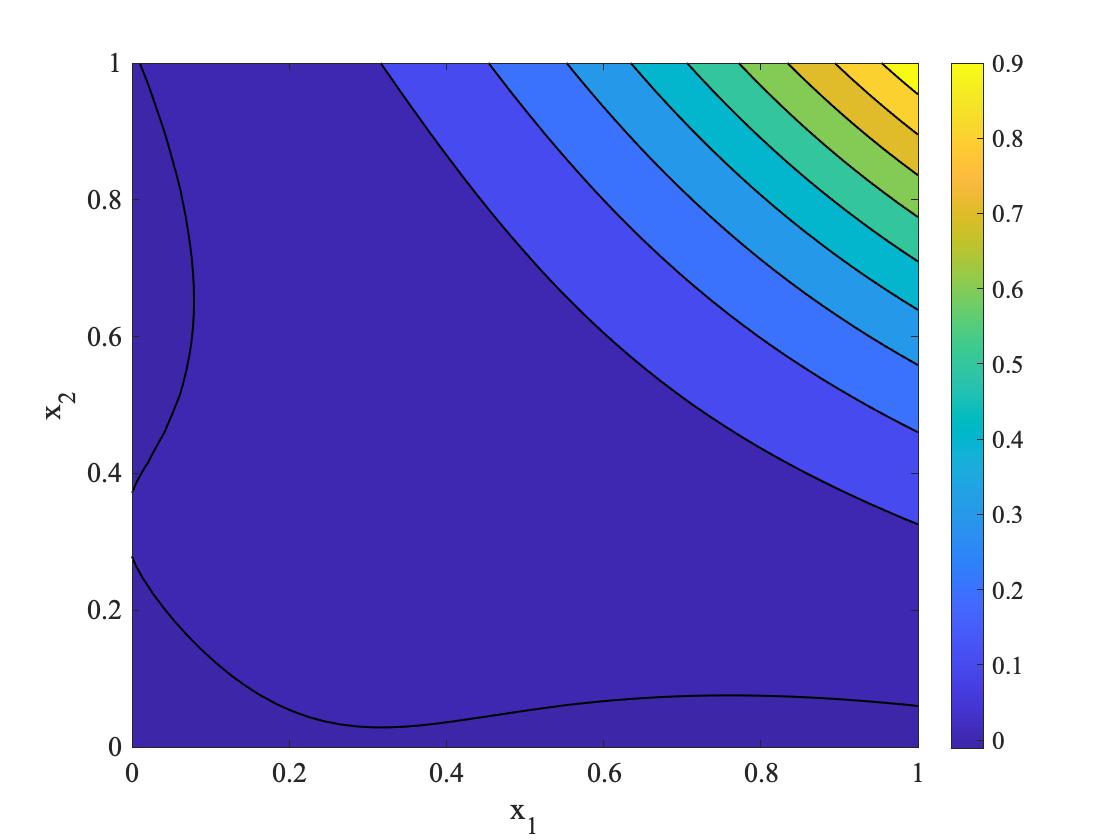}
\end{minipage}
}
\subfigure[prediction while $\alpha=0.9$]{
\begin{minipage}[t]{0.25\linewidth}
\centering
\includegraphics[width=1.14\textwidth]{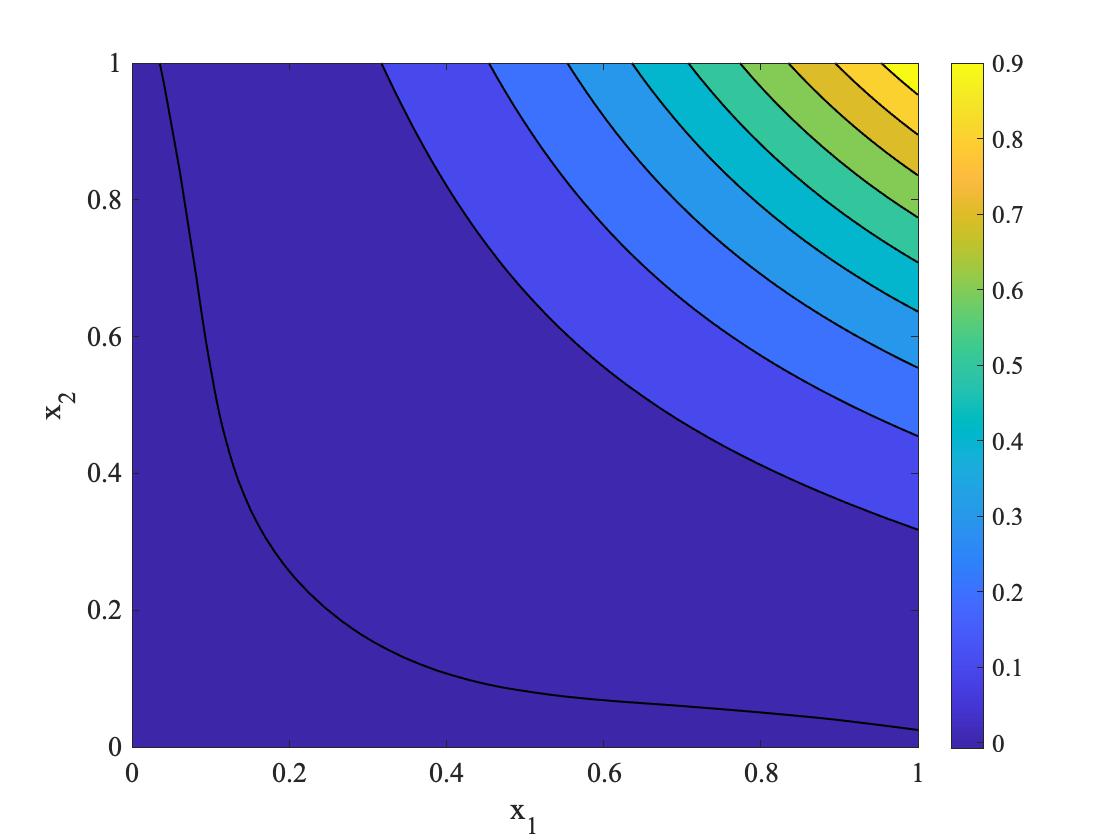}
\end{minipage}
}
                 
\subfigure[difference while $\alpha=0.3$]{
\begin{minipage}[t]{0.25\linewidth}
\centering
\includegraphics[width=1.14\textwidth]{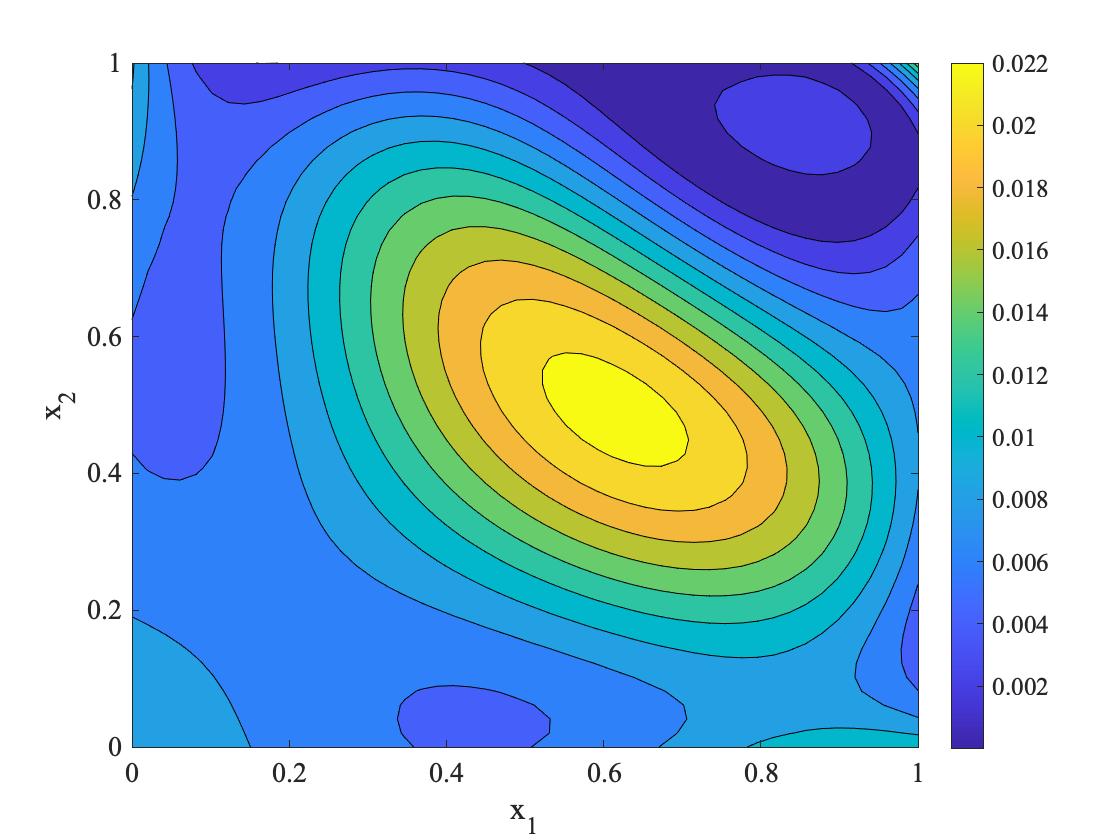}
\end{minipage}
}
\subfigure[difference while $\alpha=0.6$]{
\begin{minipage}[t]{0.25\linewidth}
\centering
\includegraphics[width=1.14\textwidth]{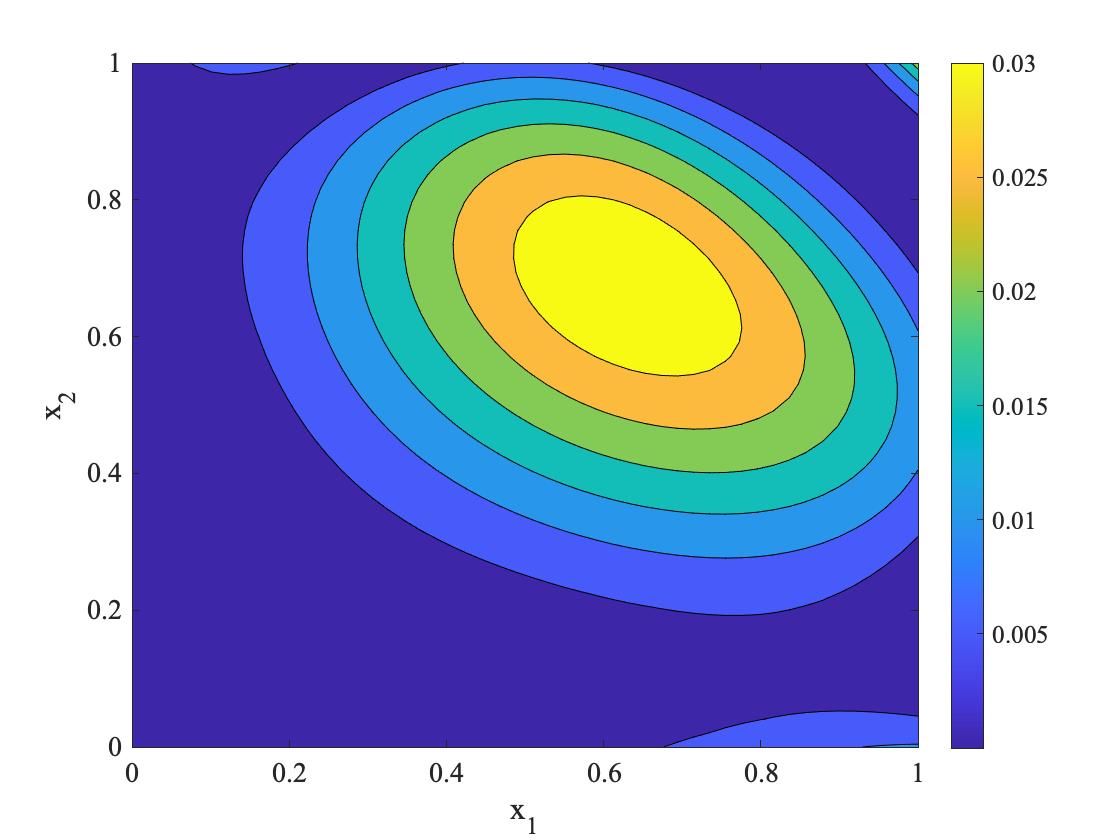}
\end{minipage}
}
\subfigure[difference while $\alpha=0.9$]{
\begin{minipage}[t]{0.25\linewidth}
\centering
\includegraphics[width=1.14\textwidth]{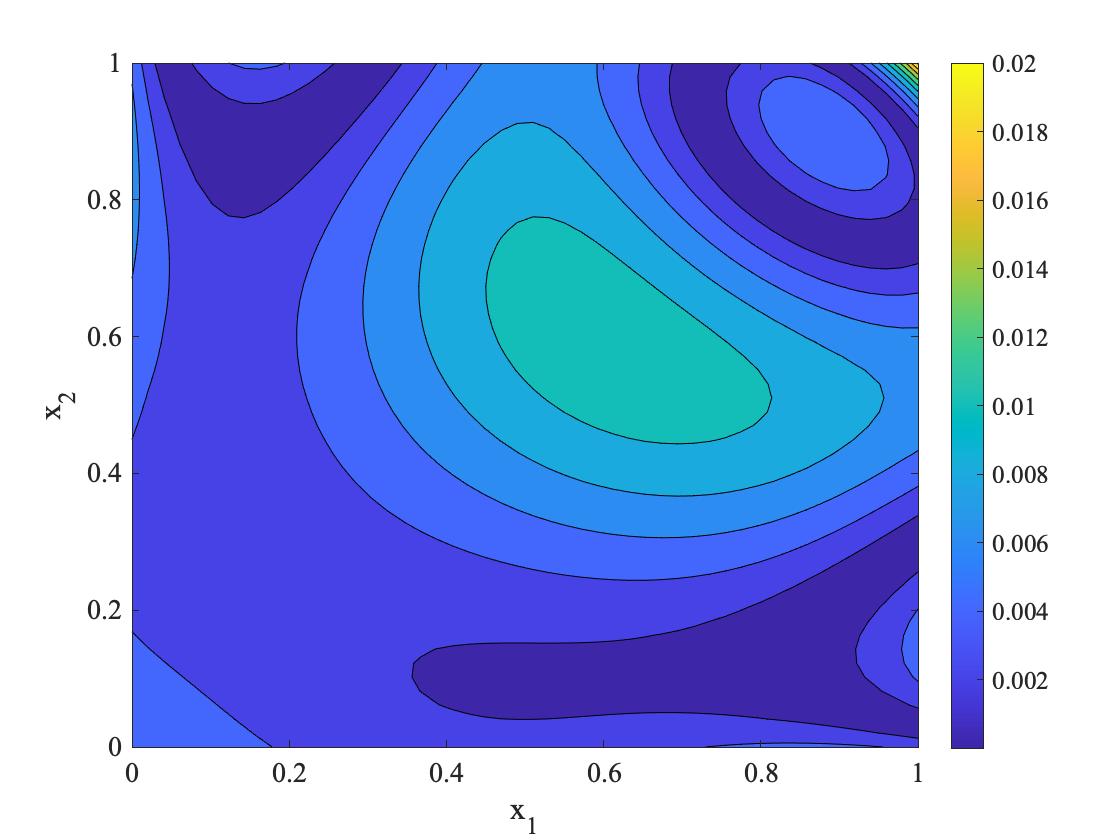}
\end{minipage}
}
\centering
\caption{Results  with different $\alpha$ while noise level chosen by $\delta=5\%$. The first line displays the predictive values of the f-WANs for $\alpha=0.3$, $0.6$, and $0.9$, from left to right. The second row shows the corresponding errors, i.e., the absolute difference between the predicted solution $u^{\delta}_{\text{prediction}}$ and the true solution $u_{\text{true}}$.}
\label{Fig:2d_pred_with_noise}
\end{figure}

\subsubsection{The 2-dimensional fractional equation exhibits a solution with less smoothness}\label{ex:lessSmooth}
In this example, we consider the solution of the model with less smoothness. For instance, the solution of the model problem is chosen by 
$    u(x,y)=x^{\frac{16}{15}}y^{\frac{16}{15}}.$ To better illustrate the results near the point $(0,0)$, where the true solution becomes singular after taking the $2-\alpha$ derivative, we employ a logarithmic scale for both the $x$ and $y$ coordinates.
The true solution is shown in Figure \ref{Fig:2d_true_xy_1516}.
In this example, we set $M_I=2500,M_B=400,N=50,K_u=1,K_v=1,\tau_\theta=0.0001,\tau_\eta=0.001,\beta=1000000$.
We choose $\alpha=0.2,0.4,0.6$, which is different from Section \ref{Exm_x2}, to show our method is still work.
 \begin{figure}[H]
\centering
\includegraphics[width=0.33\textwidth]{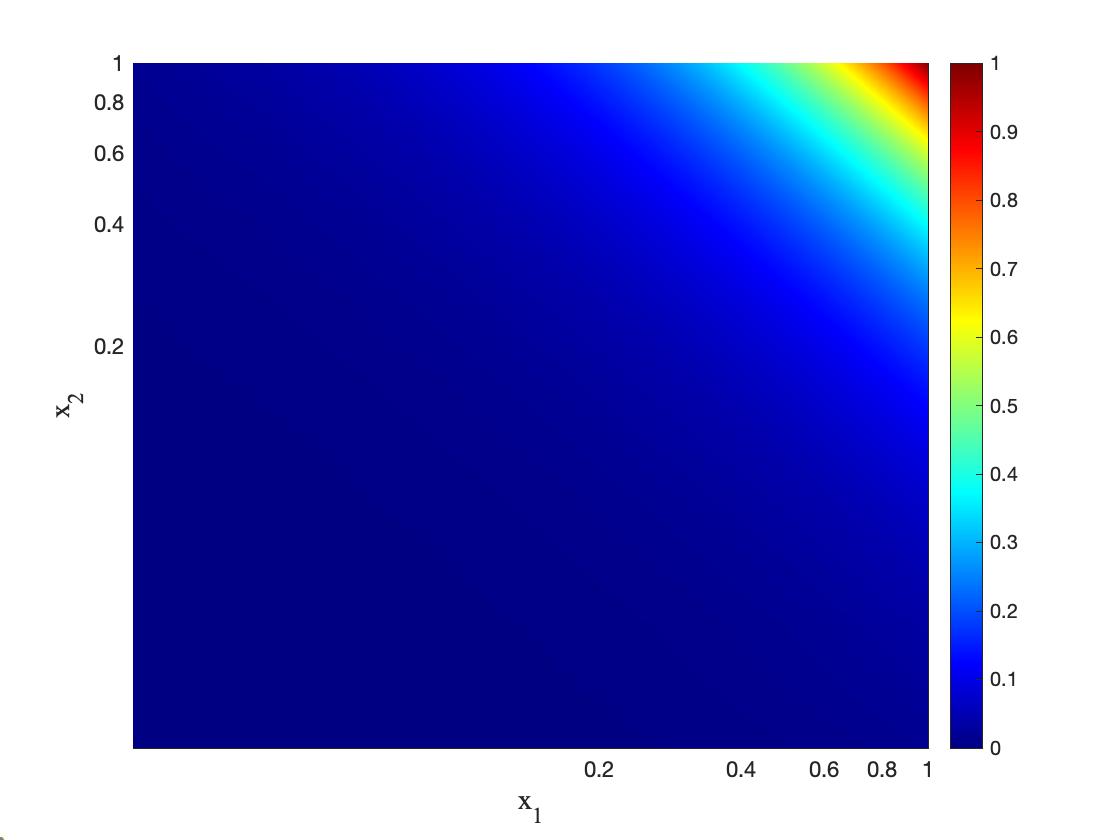}
\caption{True solution of $u(x,y) = x^{\frac{16}{15}}y^{\frac{16}{15}}$.}
\label{Fig:2d_true_xy_1516}
\end{figure}

\begin{figure}[H]
\centering
\subfigure[prediction while $\alpha=0.2$]{
\begin{minipage}[t]{0.25\linewidth}
\centering
\includegraphics[width=1.14\textwidth]{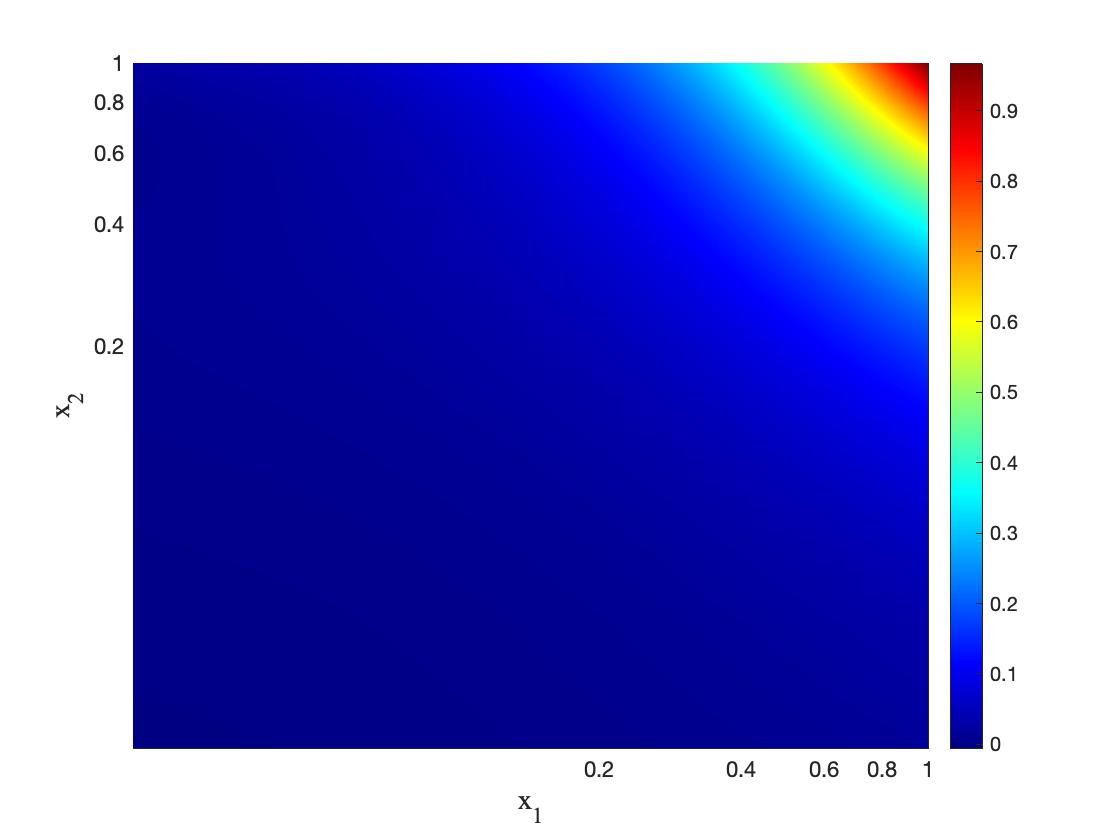}
\end{minipage}
}
\subfigure[prediction while $\alpha=0.4$]{
\begin{minipage}[t]{0.25\linewidth}
\centering
\includegraphics[width=1.14\textwidth]{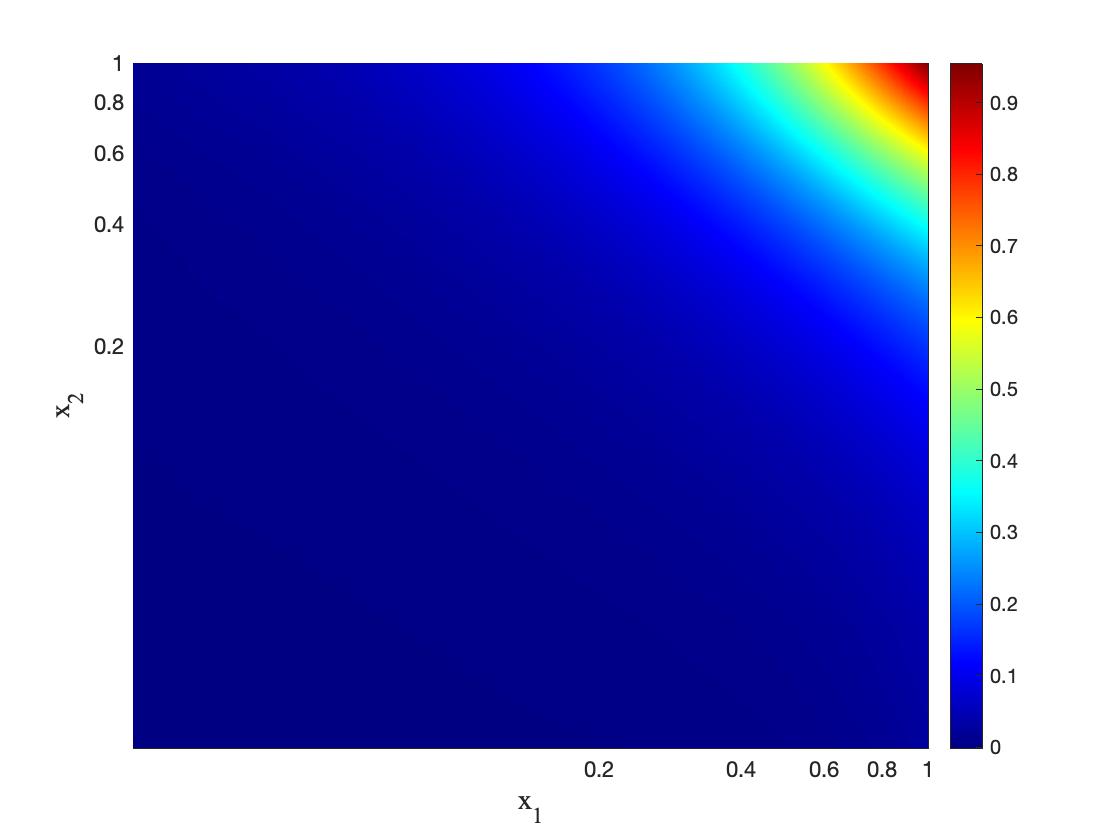}
\end{minipage}
}
\subfigure[prediction while $\alpha=0.6$]{
\begin{minipage}[t]{0.25\linewidth}
\centering
\includegraphics[width=1.14\textwidth]{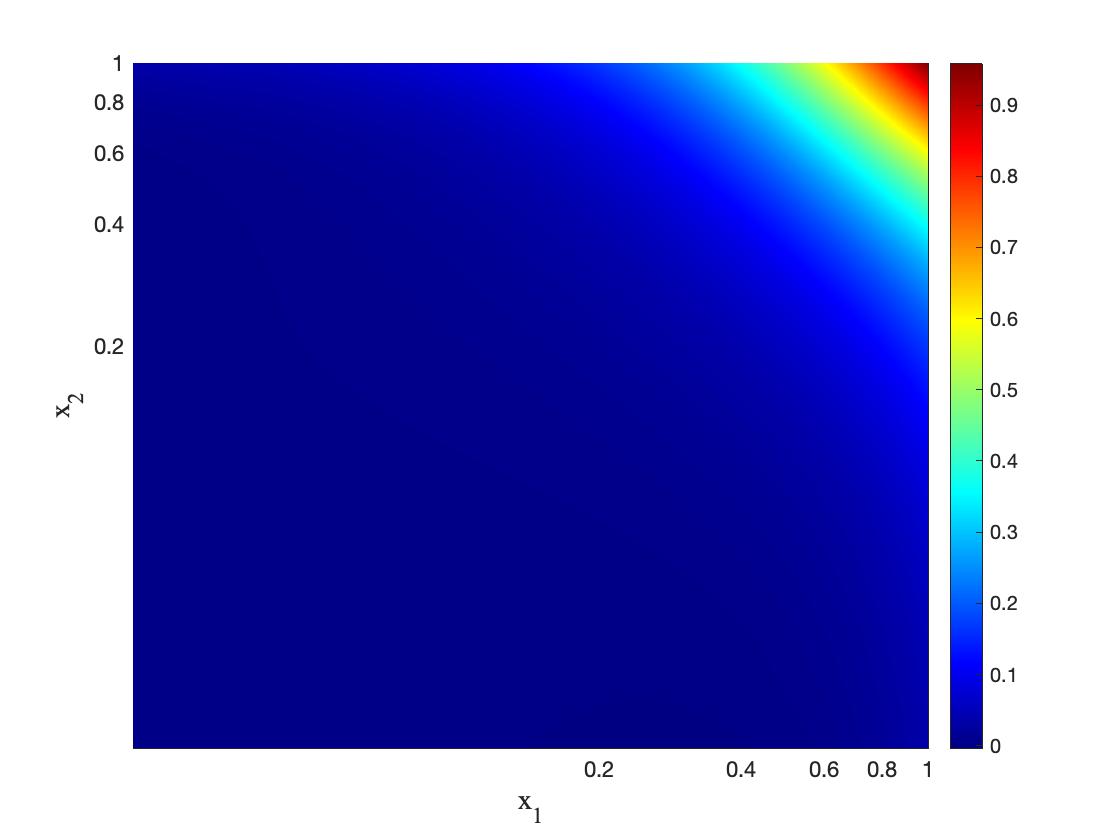}
\end{minipage}
}
                 
\subfigure[difference while $\alpha=0.2$]{
\begin{minipage}[t]{0.25\linewidth}
\centering
\includegraphics[width=1.14\textwidth]{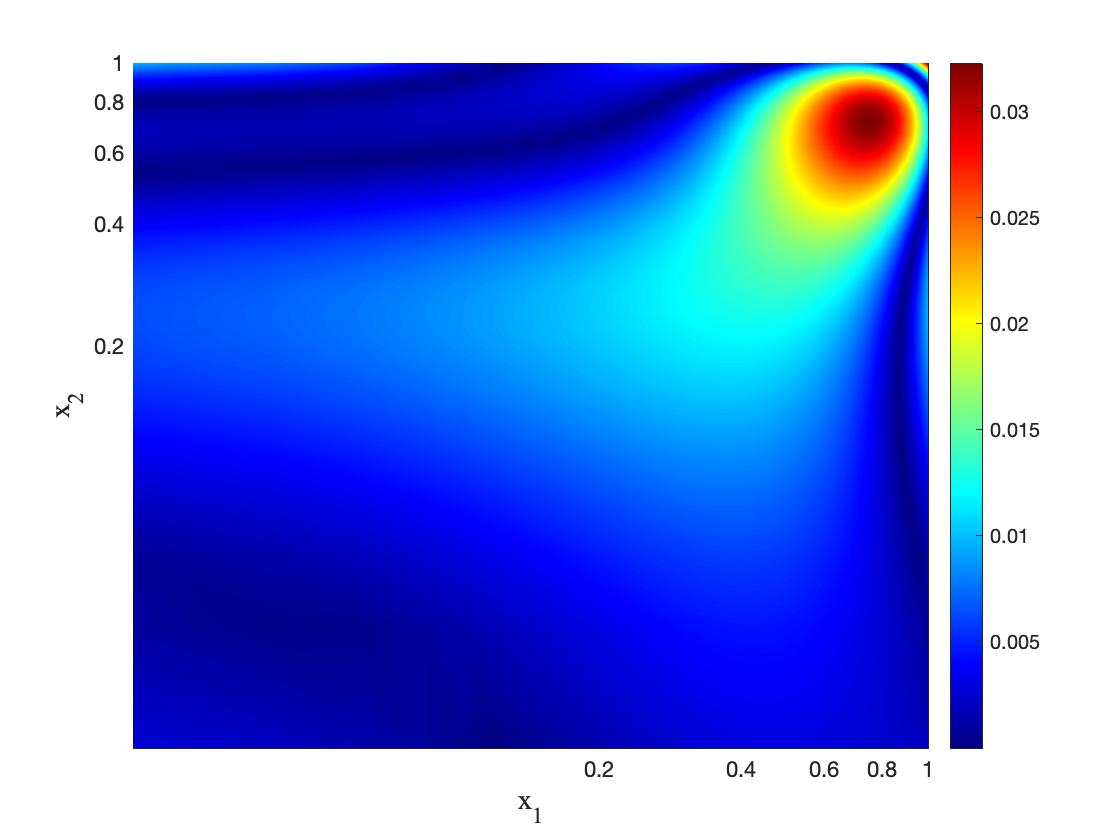}
\end{minipage}
}
\subfigure[difference while $\alpha=0.4$]{
\begin{minipage}[t]{0.25\linewidth}
\centering
\includegraphics[width=1.14\textwidth]{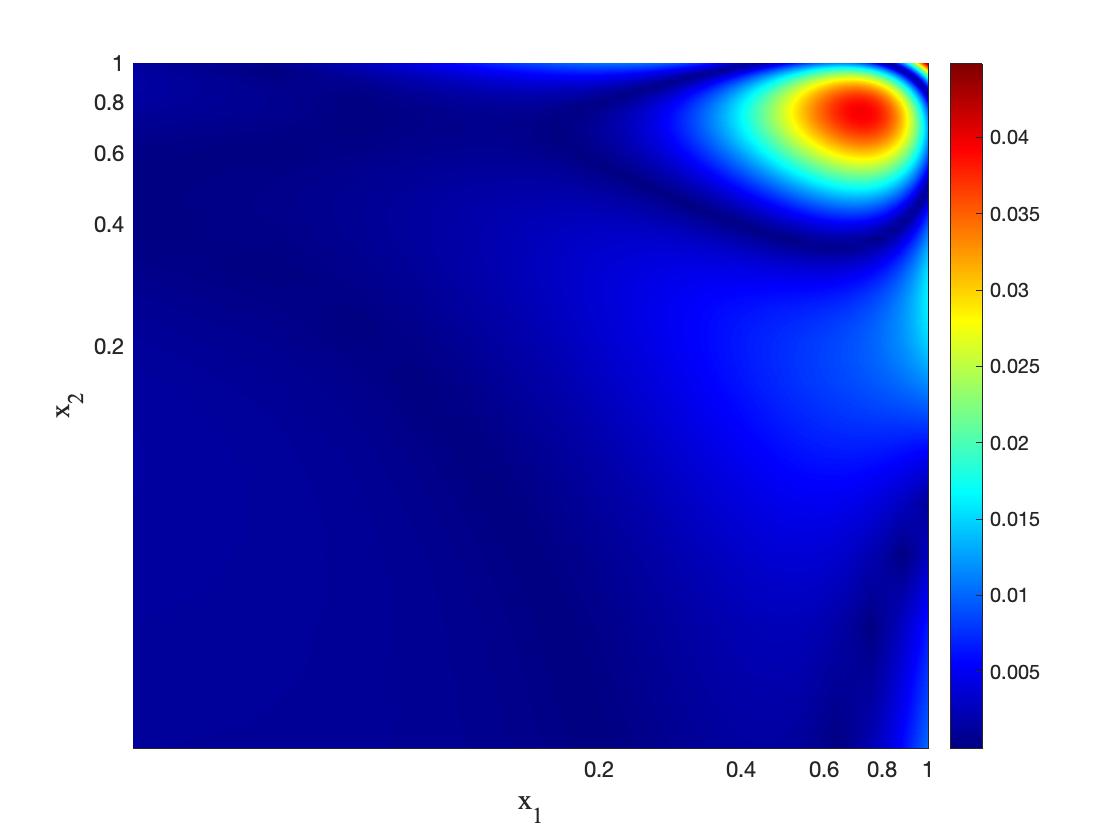}
\end{minipage}
}
\subfigure[difference while $\alpha=0.6$]{
\begin{minipage}[t]{0.25\linewidth}
\centering
\includegraphics[width=1.14\textwidth]{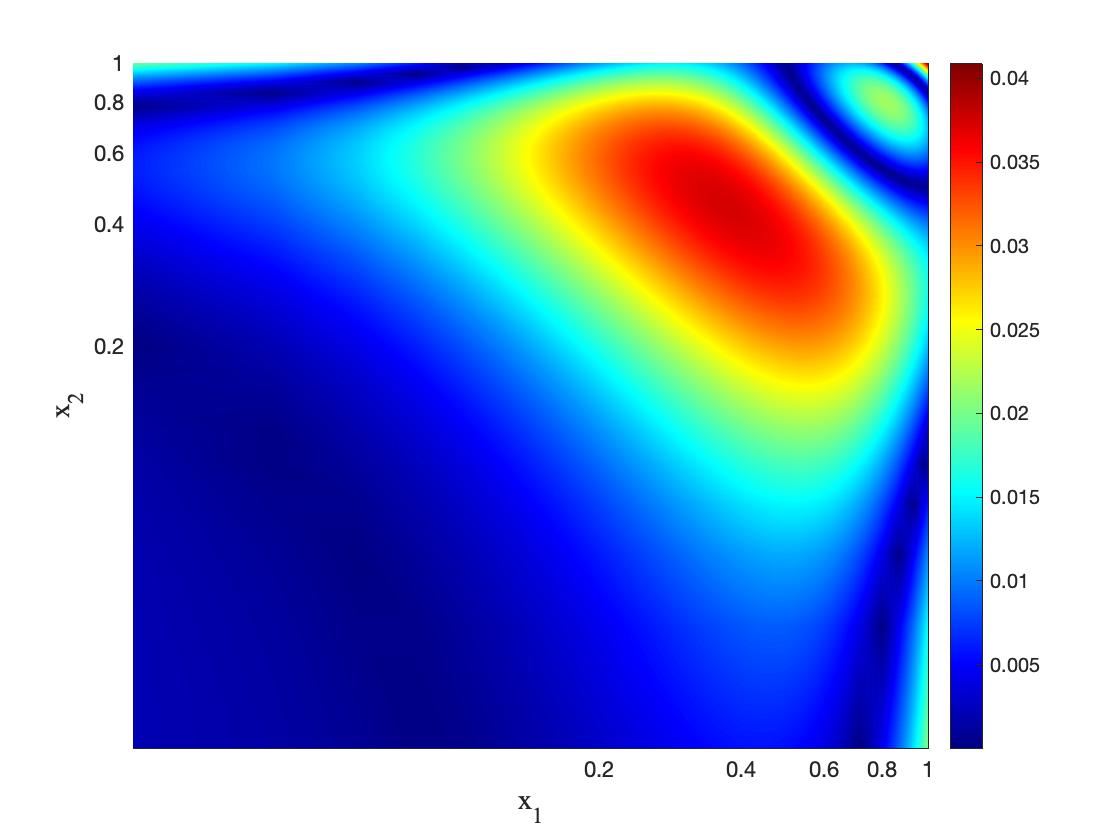}
\end{minipage}
}
\centering
\caption{Numerical solutions of the \ref{ex:lessSmooth} with less smoothness. In the first row, from left to right, is the predictive value of the WAN when alpha is equal to 0.2, 0.4 and 0.6. While the second row is the corresponding residual, i.e. $|u_{\text{prediction}} - u_{\text{true}}|$.
}
\label{Fig:2d_pred_xy}
\end{figure}

\subsubsection{The 3-dimensional fractional equation exhibits a solution with smoothness}\label{ex:3d}
In this subsection, we demonstrate the effectiveness of f-WANs in solving 3D problems. The higher dimensionality does not introduce any significant differences. Moreover, we will investigate the correlation between $\alpha$ and relative error.
Here we consider fractional equation with $d=3$ in (\ref{strong_form_int}) along with boundary condition (\ref{strong_form_bound}).
We consider the model problem in $\Omega=(0,1)^3$.
The exact solution is given by $u(x,y,z)=x^2 y^2z^2$.
First, in Figure \ref{Fig:3d_plot}, we give the volume slice planes of model problem with $\alpha=0.5$. We select the slice with $x=0.8$ and $z=0.7$ since the points with large values are primarily concentrated near the point $(1,1,1)$.

\begin{figure}[H]
\centering
\subfigure[$u_{\text{true}}$]{
\begin{minipage}[t]{0.4\linewidth}
\centering
\includegraphics[width=1\textwidth]{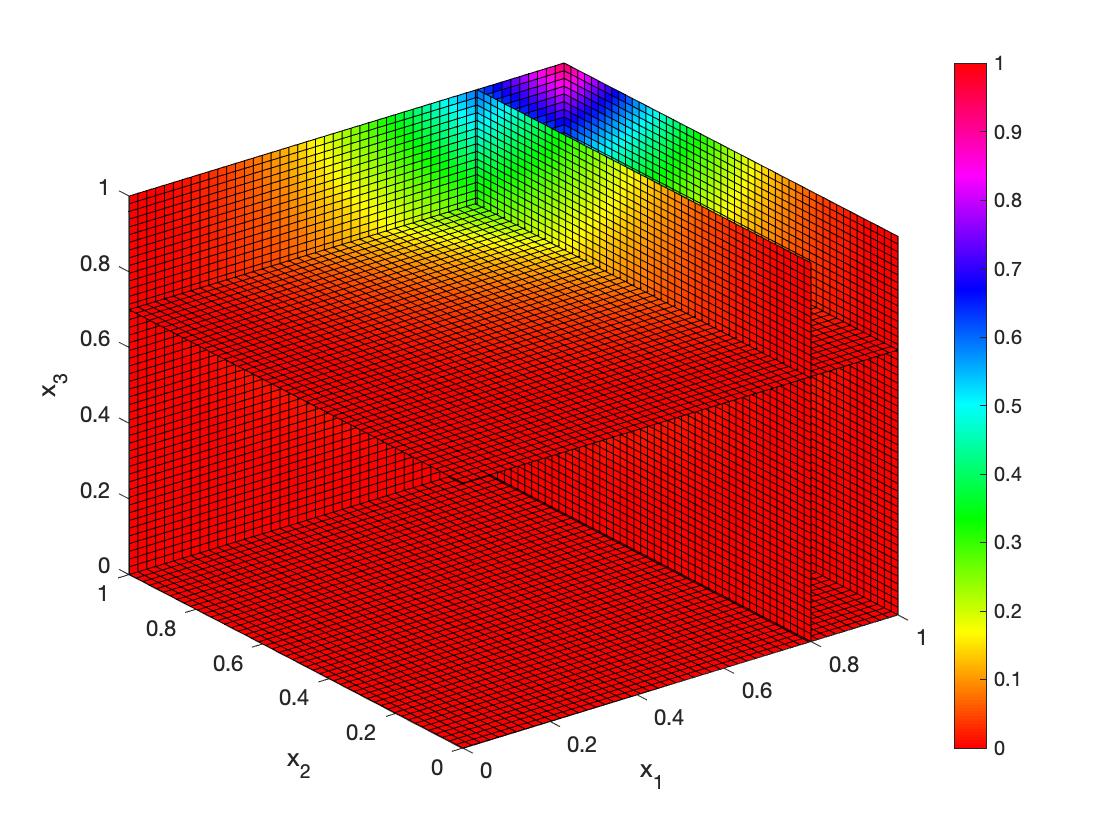}
\end{minipage}
}
\subfigure[$u_{\text{prediction}}$]{
\begin{minipage}[t]{0.4\linewidth}
\centering
\includegraphics[width=1\textwidth]{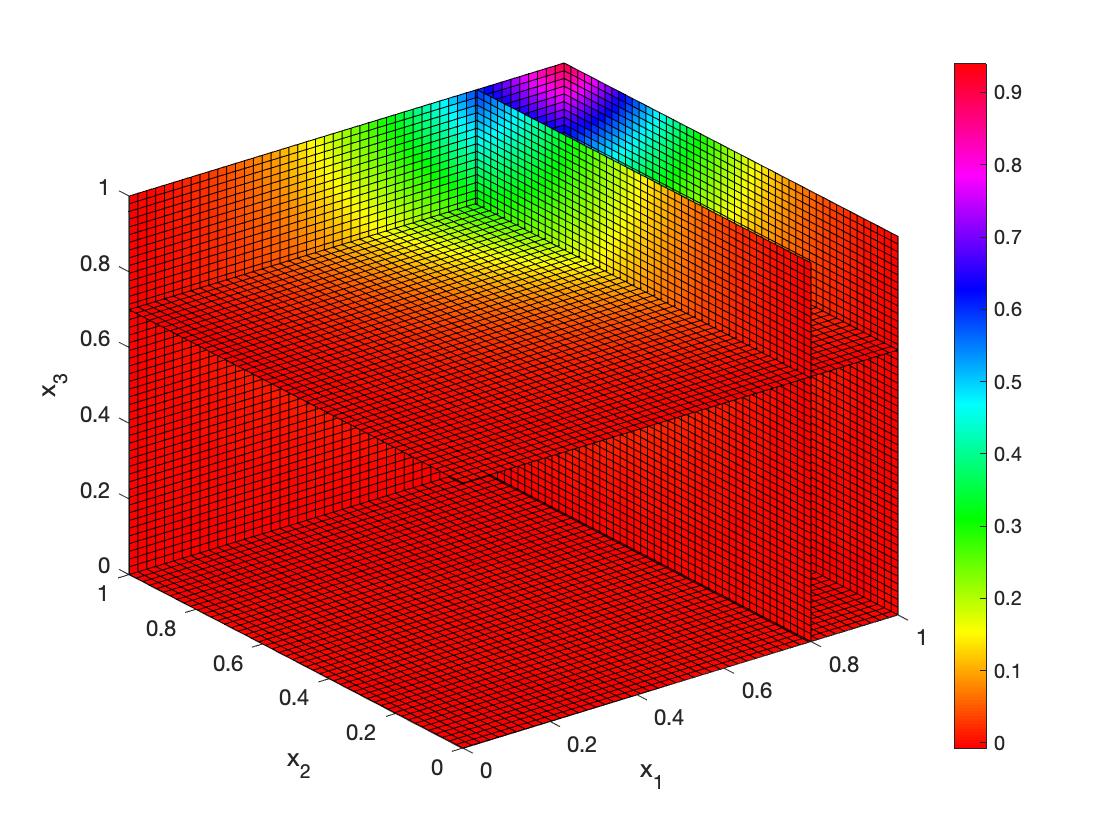}
\end{minipage}
}
\centering
\caption{Volume slice planes of model problem in 3-d with $\alpha=0.5$. The left image is the true value and the right one is the prediction. Both images have the same slice locations.}
\label{Fig:3d_plot}
\end{figure}

To better visualize the subtle difference between the true value and the prediction, we chose slices at $z=1$ and $z=x$, and plotted the two-dimensional images.
Figures \ref{Fig:3d_alpha_0.1} and \ref{Fig:3d_alpha_0.9} show the cases where $\alpha$ is equal to 0.1 and 0.9, respectively. In the first line of each figure, we show the slice of $z=1$, where the true value is $u(x,y)=x^2y^2$, and its projection on the $x-y$ plane. On the second line, we show the projection of the $x=z$ plane onto the $x-y$ plane, where the true value is $u(x,y)=x^4y^2$. Regardless of whether $\alpha$ is equal to 0.1 or 0.9, f-WANs performs well. Although the error near the point $(1,1)$ reached about $8\%$, the overall relative error is still around $3.5\%$, as given in equation \eqref{rela_err}.

\begin{figure}[H]
\centering
\subfigure[$u_{\text{true}}$ while $z=1$]{
\begin{minipage}[t]{0.25\linewidth}
\centering
\includegraphics[width=1.15\textwidth]{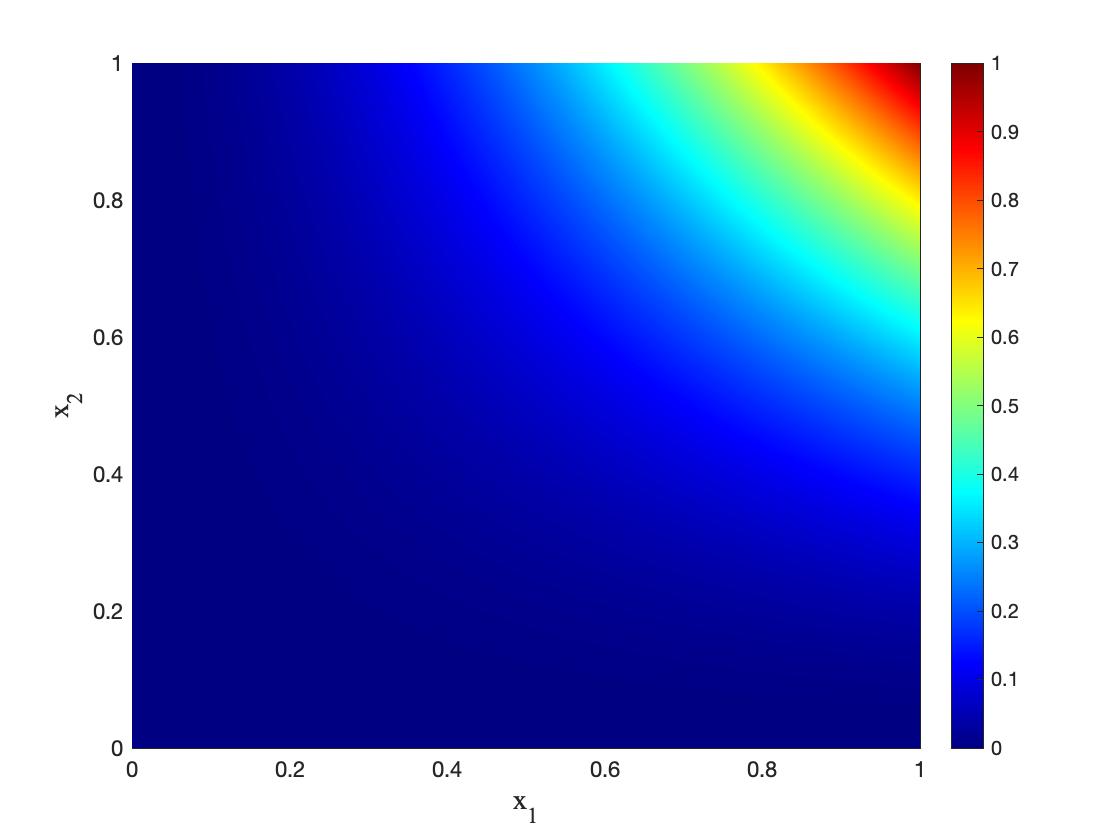}
\end{minipage}
}
\subfigure[$u_{\text{prediction}}$ while $z=1$]{
\begin{minipage}[t]{0.25\linewidth}
\centering
\includegraphics[width=1.15\textwidth]{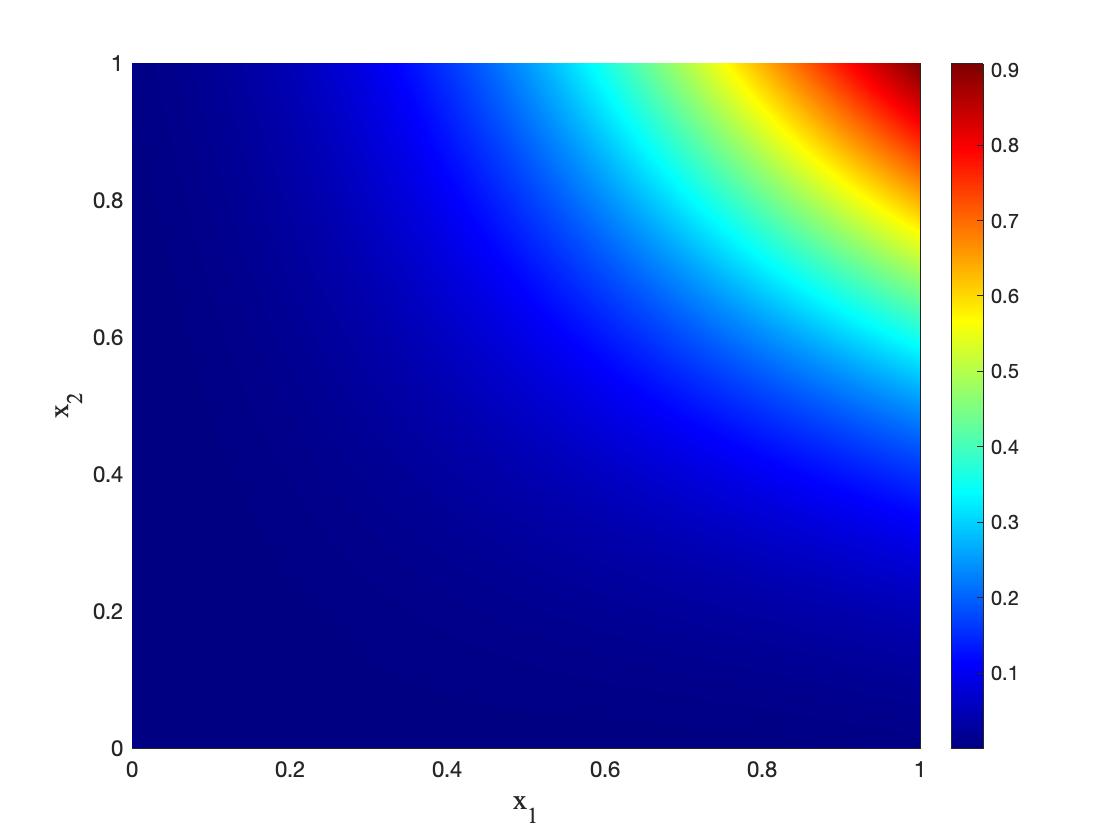}
\end{minipage}
}
\subfigure[$u_{\text{difference}}$ while $z=1$]{
\begin{minipage}[t]{0.25\linewidth}
\centering
\includegraphics[width=1.15\textwidth]{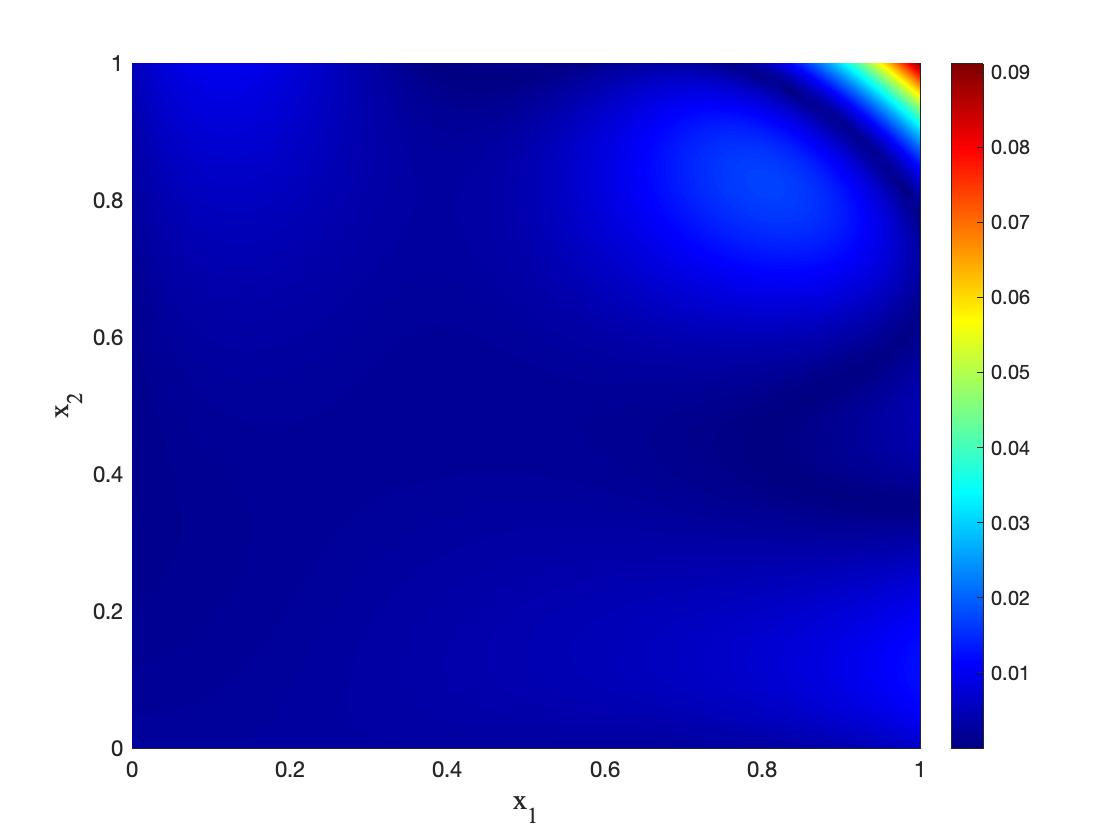}
\end{minipage}
}
                 
\subfigure[$u_{\text{true}}$ while $z=x$]{
\begin{minipage}[t]{0.25\linewidth}
\centering
\includegraphics[width=1.15\textwidth]{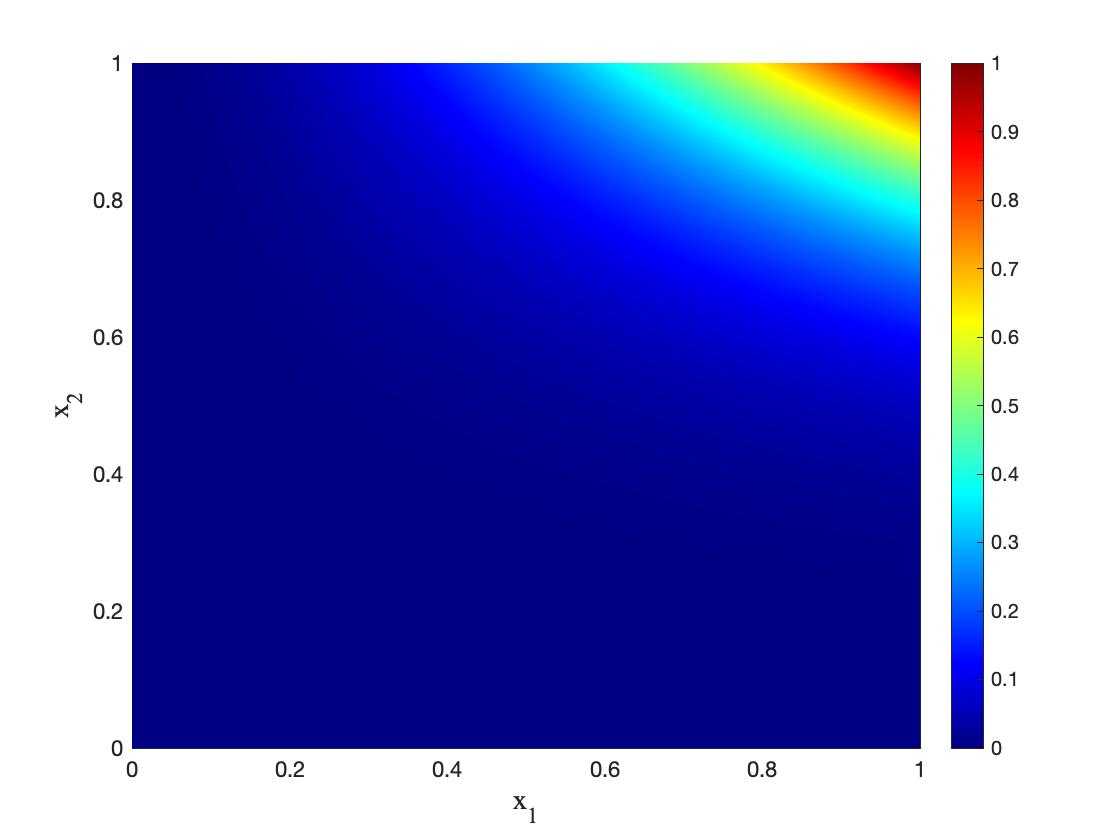}
\end{minipage}
}
\subfigure[$u_{\text{prediction}}$ while $z=x$]{
\begin{minipage}[t]{0.25\linewidth}
\centering
\includegraphics[width=1.15\textwidth]{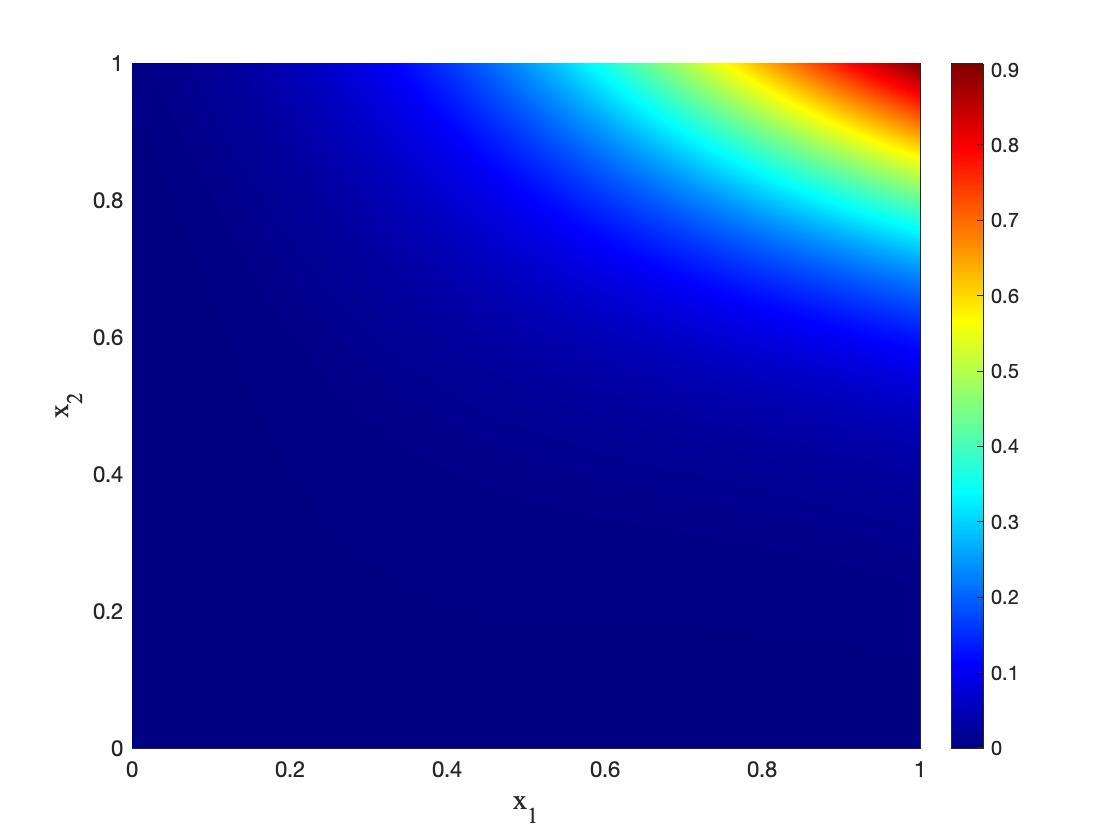}
\end{minipage}
}
\subfigure[$u_{\text{difference}}$ while $z=x$]{
\begin{minipage}[t]{0.25\linewidth}
\centering
\includegraphics[width=1.15\textwidth]{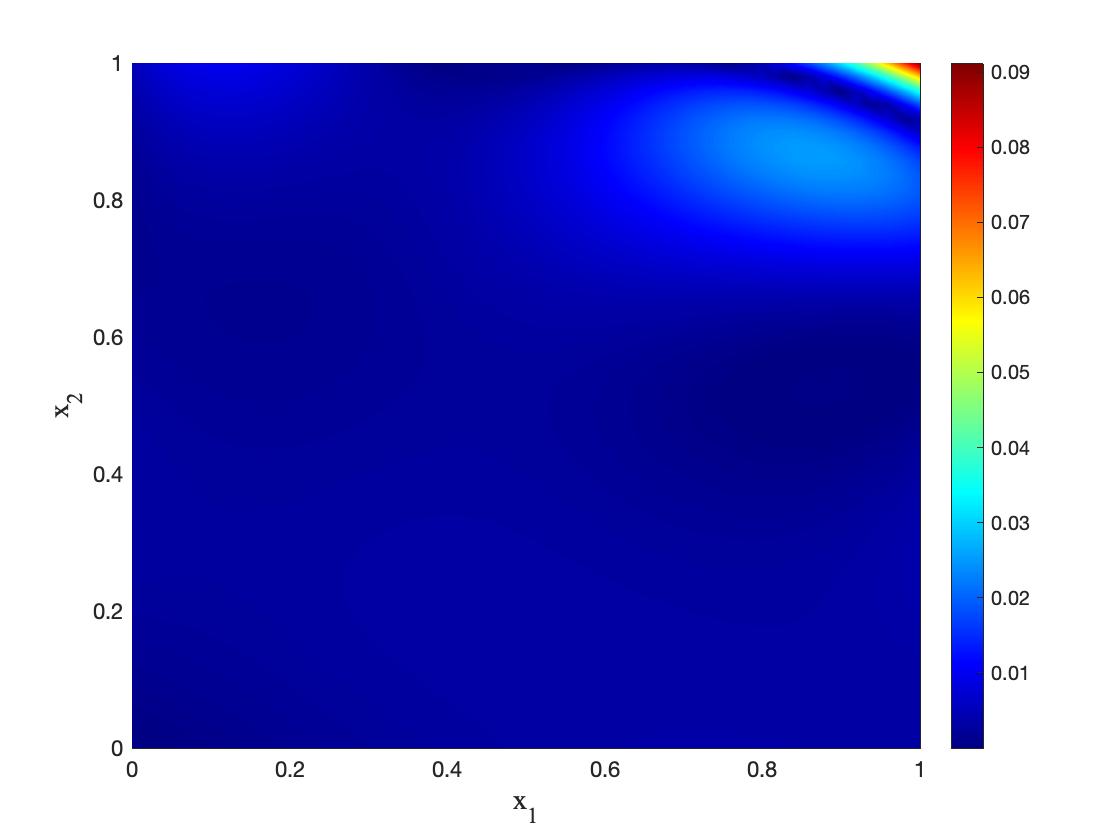}
\end{minipage}
}
\centering
\caption{Results of \ref{ex:3d} with $\alpha=0.1$. The first row displays the true value $u_{\text{true}}$, the predictive value $u_{\text{prediction}}$, and the difference $|u_{\text{true}}-u_{\text{prediction}}|$ while the slice is $z=1$. The second row presents the corresponding images with slice $z=x$.}
\label{Fig:3d_alpha_0.1}
\end{figure}

\begin{figure}[H]
\centering
\subfigure[$u_{\text{true}}$ while $z=1$]{
\begin{minipage}[t]{0.25\linewidth}
\centering
\includegraphics[width=1.15\textwidth]{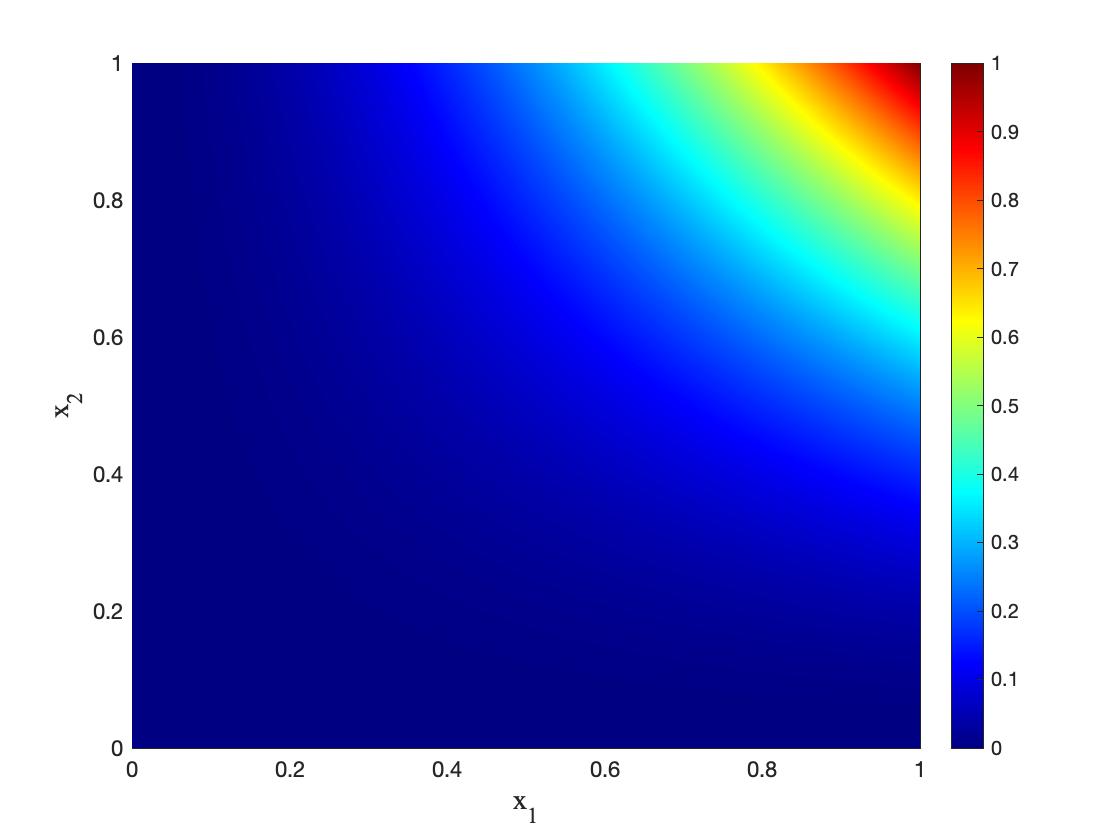}
\end{minipage}
}
\subfigure[$u_{\text{prediction}}$ while $z=1$]{
\begin{minipage}[t]{0.25\linewidth}
\centering
\includegraphics[width=1.15\textwidth]{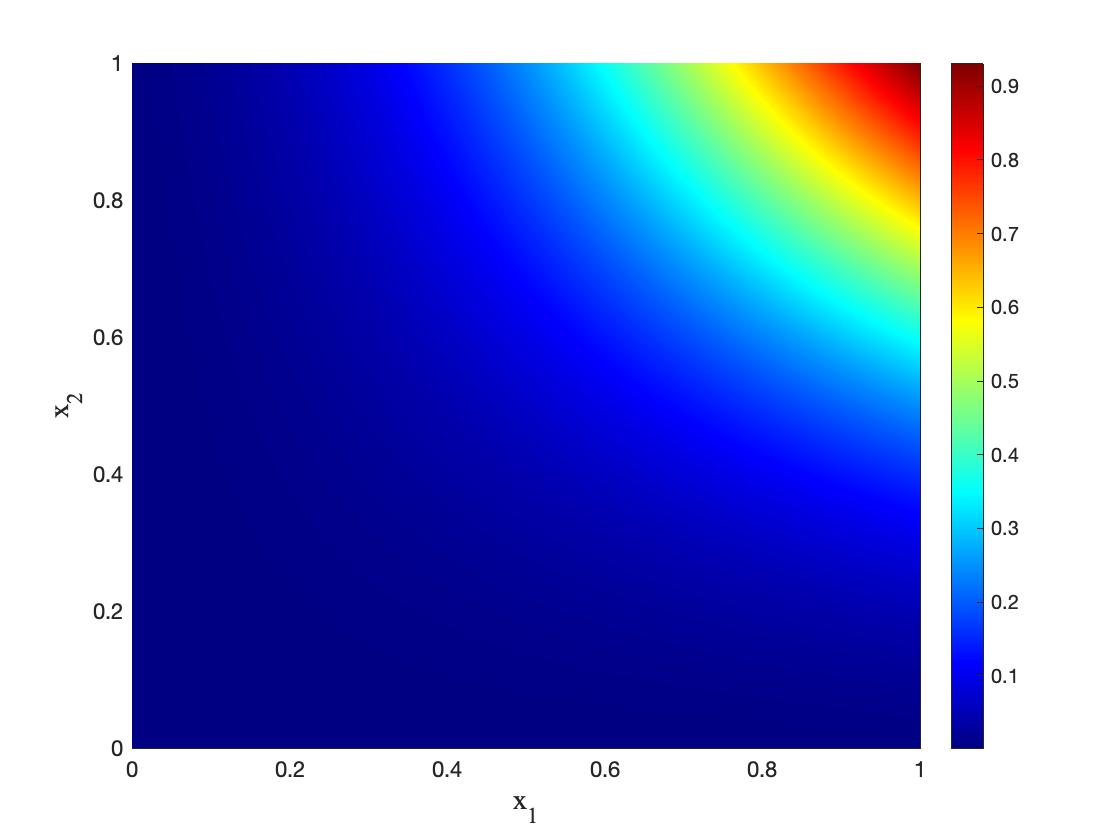}
\end{minipage}
}
\subfigure[$u_{\text{difference}}$ while $z=1$]{
\begin{minipage}[t]{0.25\linewidth}
\centering
\includegraphics[width=1.15\textwidth]{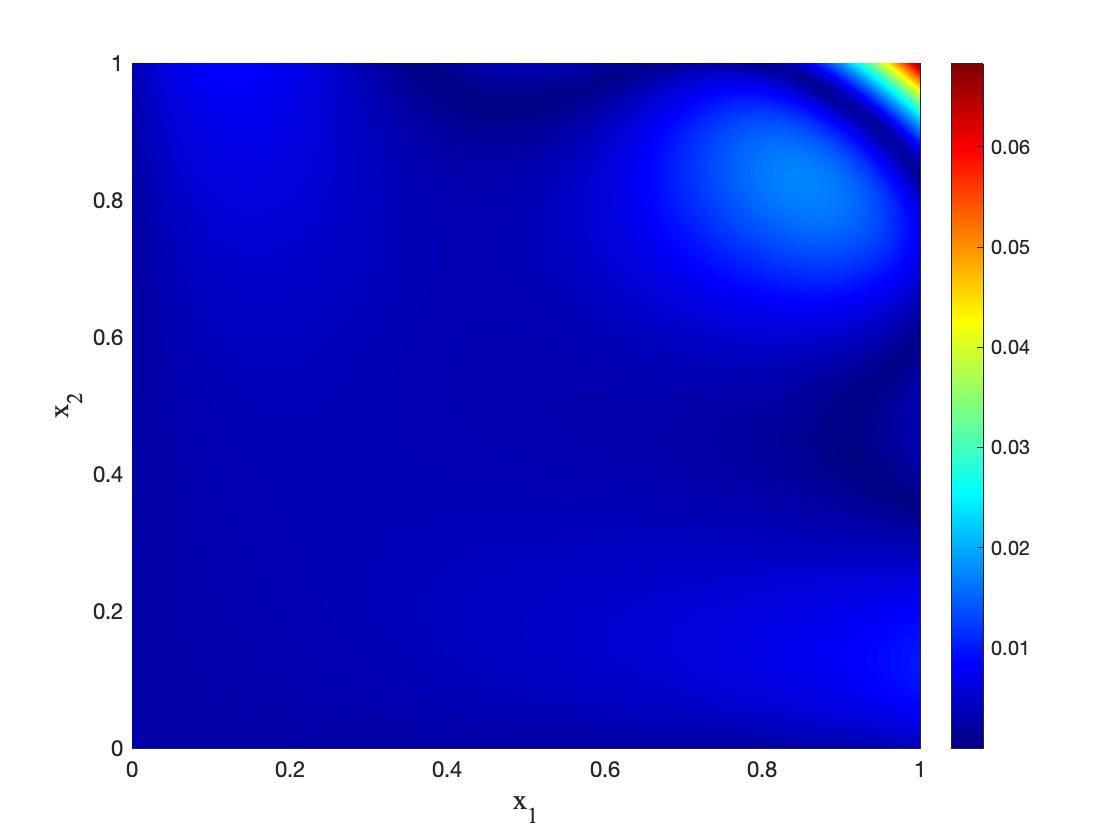}
\end{minipage}
}
                 
\subfigure[$u_{\text{true}}$ while $z=x$]{
\begin{minipage}[t]{0.25\linewidth}
\centering
\includegraphics[width=1.15\textwidth]{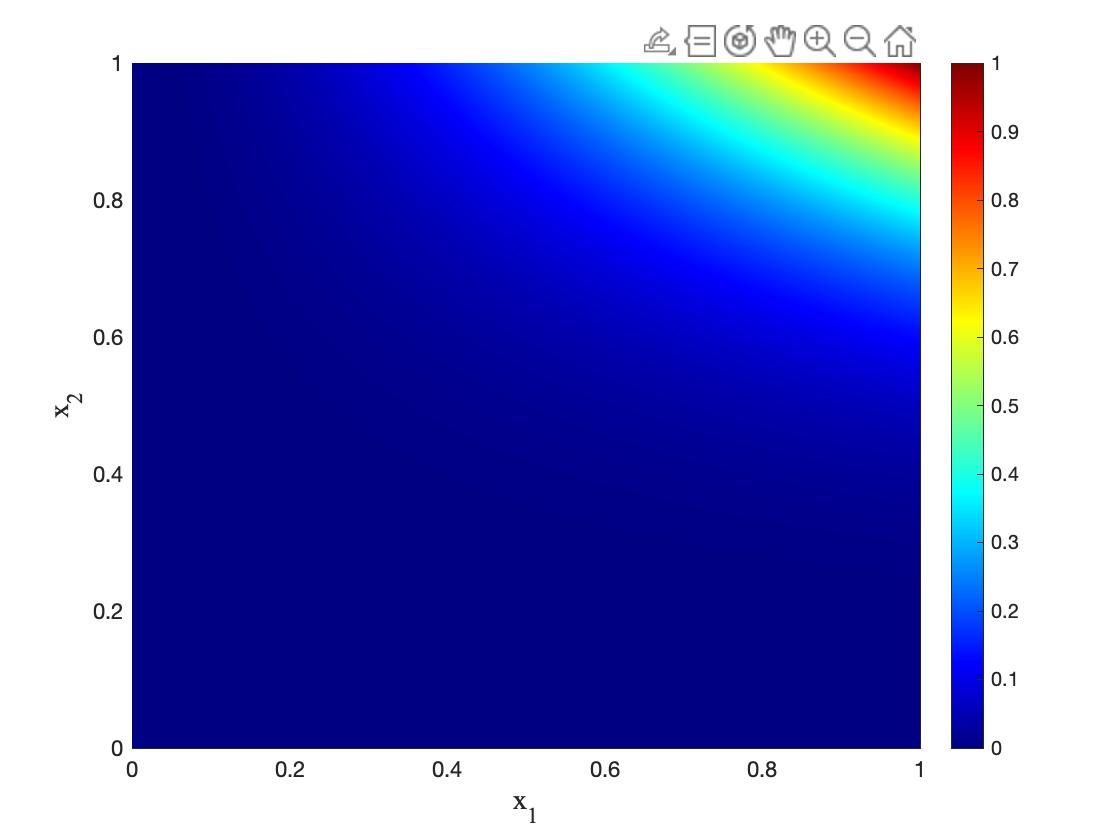}
\end{minipage}
}
\subfigure[$u_{\text{prediction}}$ while $z=x$]{
\begin{minipage}[t]{0.25\linewidth}
\centering
\includegraphics[width=1.15\textwidth]{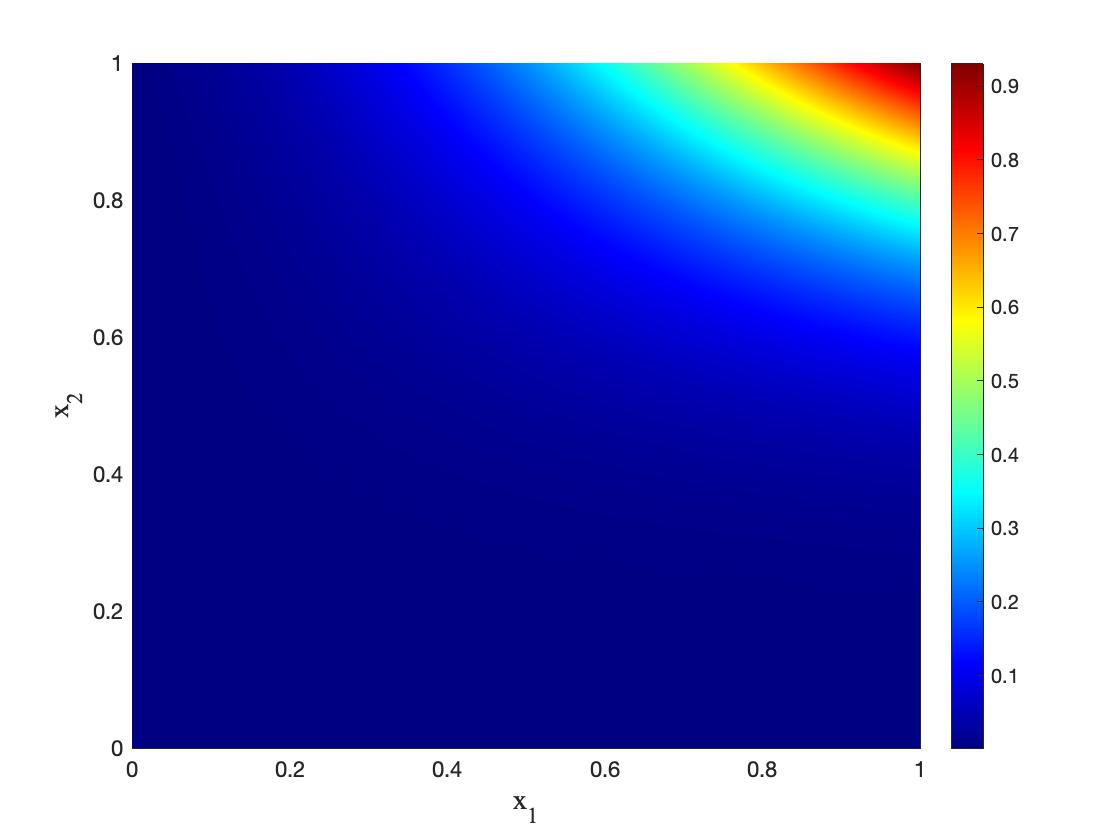}
\end{minipage}
}
\subfigure[$u_{\text{difference}}$ while $z=x$]{
\begin{minipage}[t]{0.25\linewidth}
\centering
\includegraphics[width=1.15\textwidth]{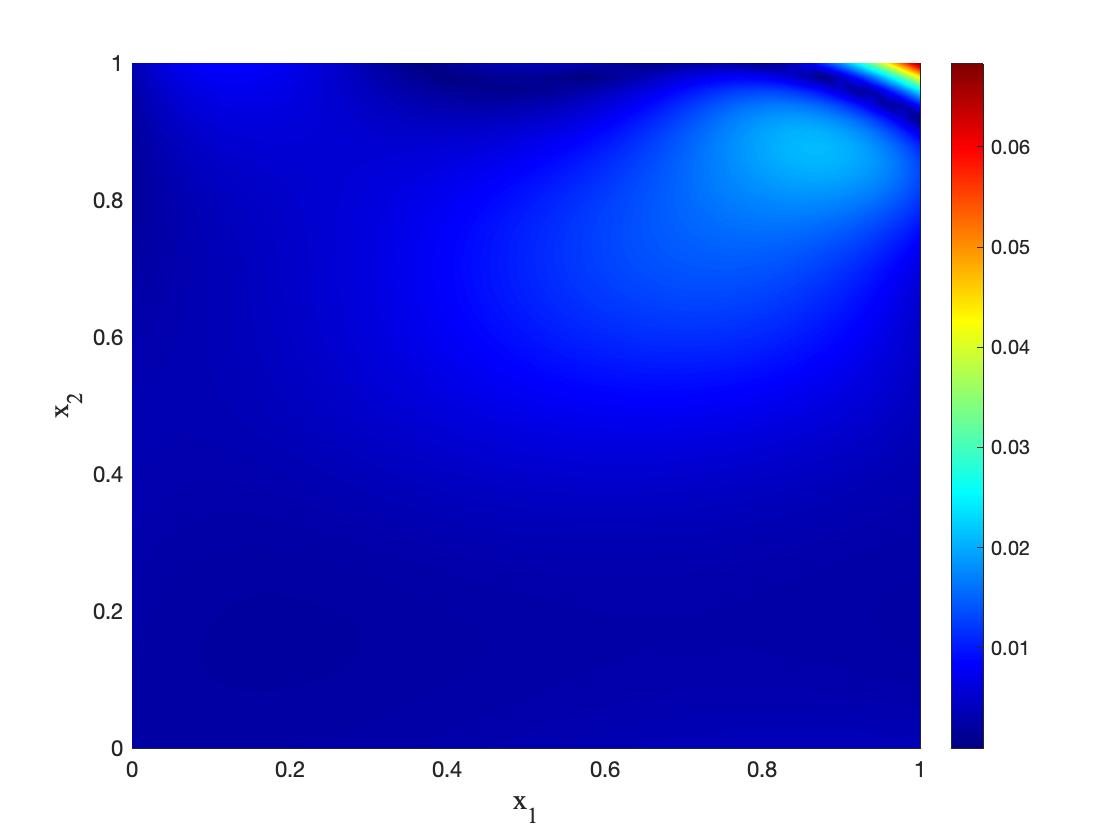}
\end{minipage}
}
\centering
\caption{Results of \ref{ex:3d} with $\alpha=0.9$. The first row displays the true value $u_{\text{true}}$, the predictive value $u_{\text{prediction}}$, and the difference $|u_{\text{true}}-u_{\text{prediction}}|$ while the slice is $z=1$. The second row presents the corresponding images with slice $z=x$.}
\label{Fig:3d_alpha_0.9}
\end{figure}

Finally, we present the relationship between $\alpha$ and relative error in Figure \ref{rela_err_pic}. This figure is generated using the same parameter settings, and we repeat the experiment ten times, taking the average of the results. In equation \eqref{strong_form_int}, there is a term $|x-w|^{\alpha - 1}$, where the exponent $\alpha-1 \in (-1,0)$ since $\alpha \in (0,1)$. As $\alpha$ approaches zero, the singularity caused by $|x-w|^{\alpha-1}$ becomes more pronounced.
The relative error, measured in the $\ell^2$-norm, is given by:
\begin{equation}\label{rela_err}
\frac{\| u_{prediction} - u_{true} \|_{\ell^2}}{\| u_{true} \|_{\ell^2}}.
\end{equation}

\begin{figure}[H]
\centering
\includegraphics[width=0.35\textwidth]{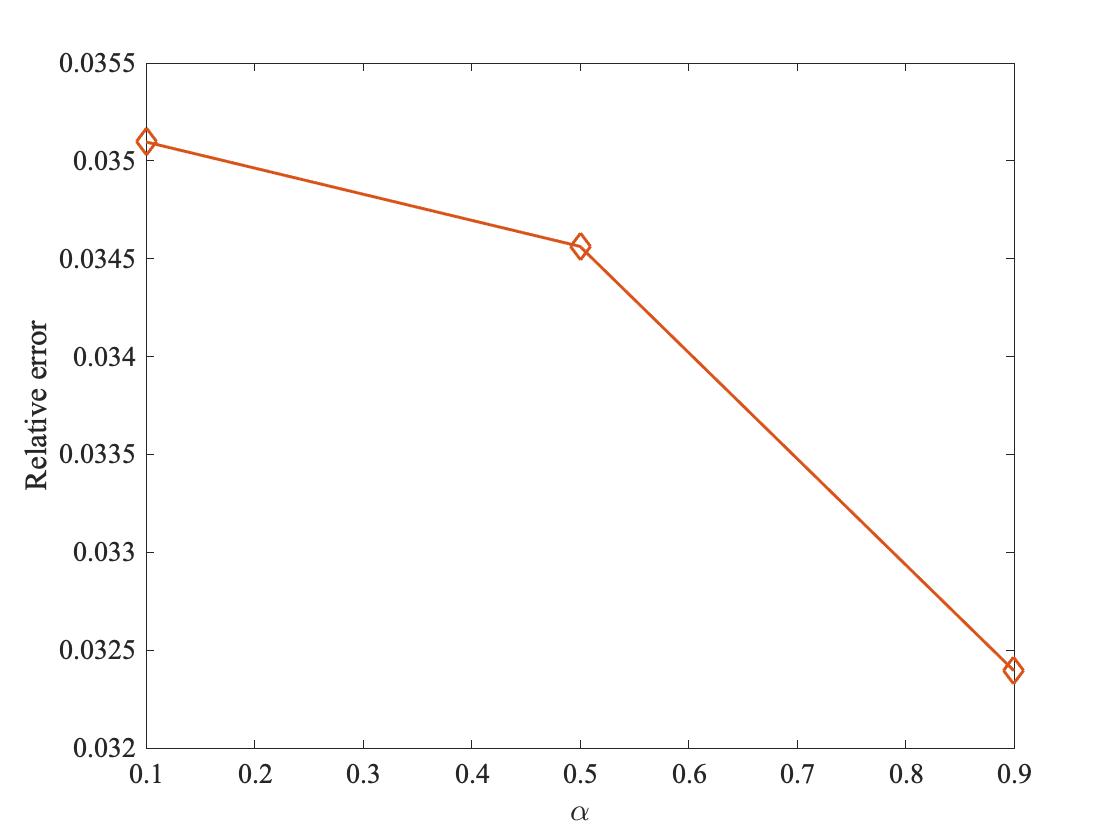}
\caption{Relative error for different values of $\alpha$.}
\label{rela_err_pic}
\end{figure}

\subsubsection{The 3-dimensional fractional equation exhibits a solution with less smoothness}\label{ex:3d:lessSmooth}

In this example, we choose the true solution by $u(x,y,z)=x^{\frac{16}{15}}+y^{\frac{16}{15}}+z^{\frac{16}{15}}$ in three dimensional case.
To enhance the expressive power of the network, we increase the number of neurons in each hidden layer of the neural network $u_\theta$ to 40 while keeping the rest of the structure the same.
We choose the slice at $y=0.7$ and $z=0.8$. We set $\alpha=0.4$. The volume slice planes are shown in Figure \ref{Fig:3d_plot_x1_x}, while the projection of the true value onto the plane $z=1$ and $z=x$ with logarithmic scale for coordinates is shown in Figure \ref{3d_x1_x}.

\begin{figure}[H]
\centering
\subfigure[$u_{\text{true}}$]{
\begin{minipage}[t]{0.4\linewidth}
\centering
\includegraphics[width=1\textwidth]{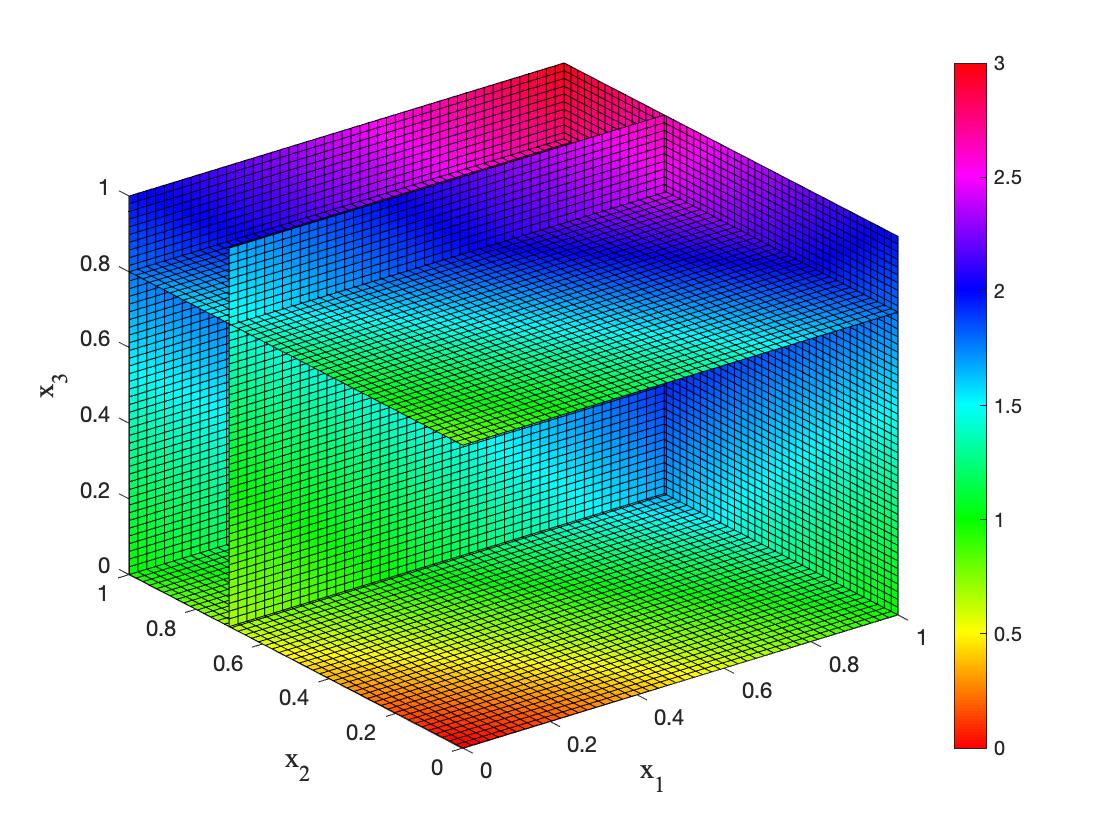}
\end{minipage}
}
\subfigure[$u_{\text{prediction}}$]{
\begin{minipage}[t]{0.4\linewidth}
\centering
\includegraphics[width=1\textwidth]{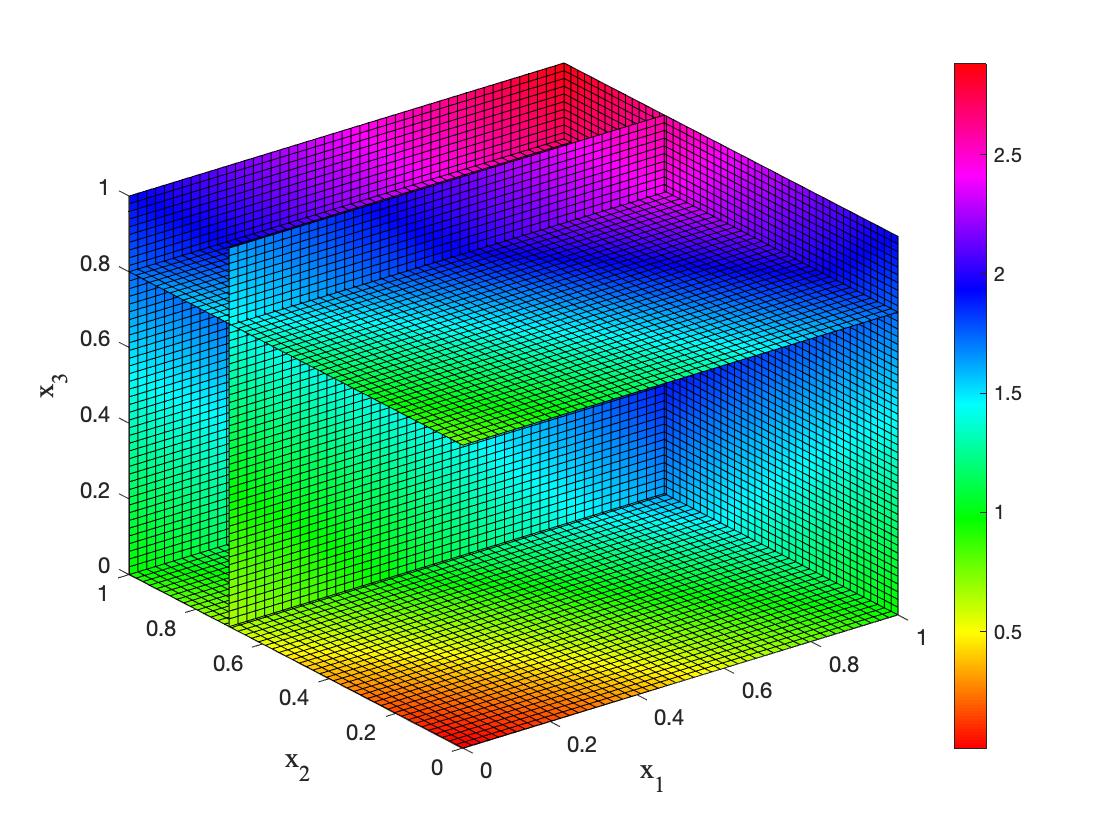}
\end{minipage}
}
\centering
\caption{Volume slice planes \ref{ex:3d:lessSmooth}  with $\alpha=0.4$ . The left image is the true value and the right one is the prediction. Both images have the same slice locations.}
\label{Fig:3d_plot_x1_x}
\end{figure}

\begin{figure}[H]
\centering
\subfigure[$u_{\text{true}}$ while $z=1$]{
\begin{minipage}[t]{0.25\linewidth}
\centering
\includegraphics[width=1.15\textwidth]{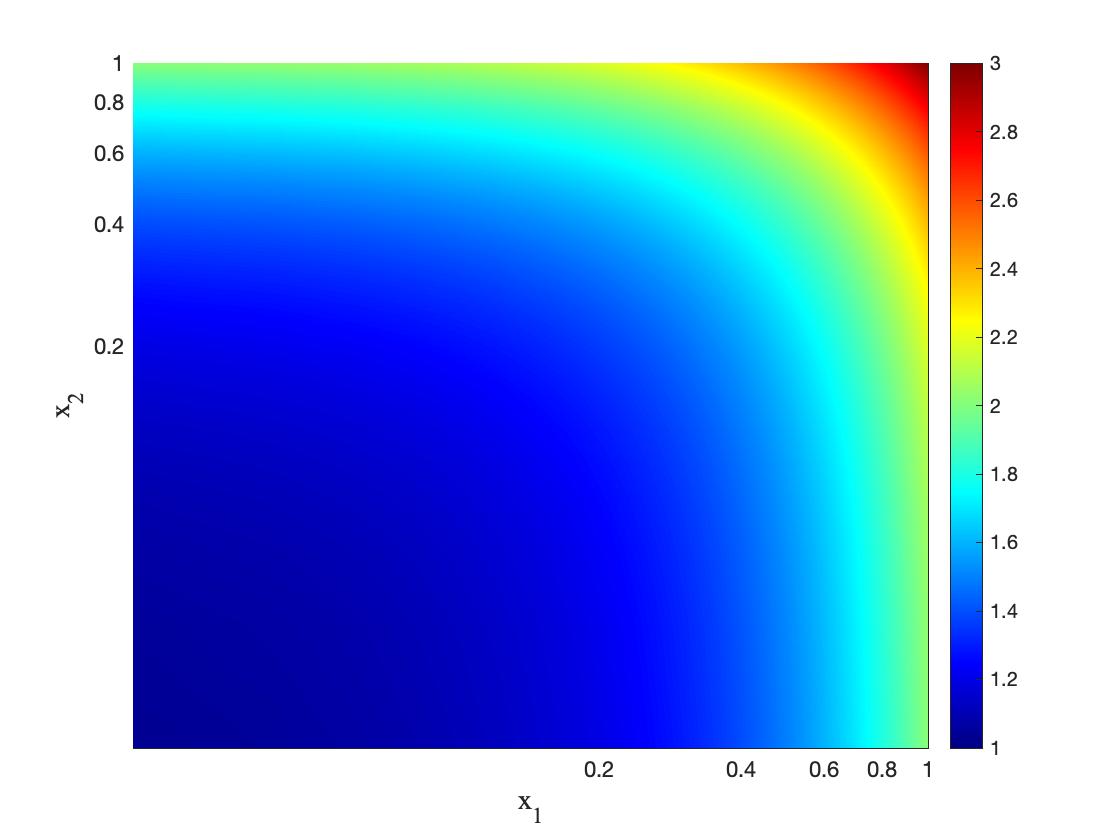}
\end{minipage}
}
\subfigure[$u_{\text{prediction}}$ while $z=1$]{
\begin{minipage}[t]{0.25\linewidth}
\centering
\includegraphics[width=1.15\textwidth]{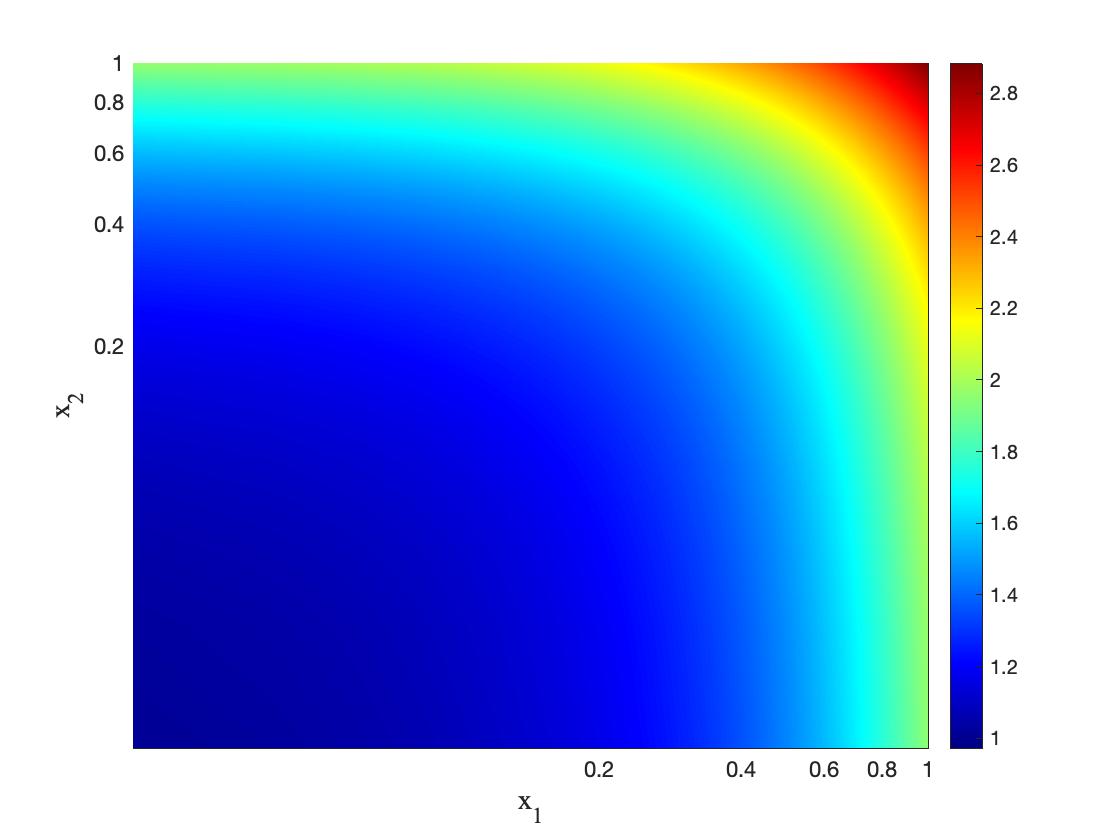}
\end{minipage}
}
\subfigure[$u_{\text{difference}}$ while $z=1$]{
\begin{minipage}[t]{0.25\linewidth}
\centering
\includegraphics[width=1.15\textwidth]{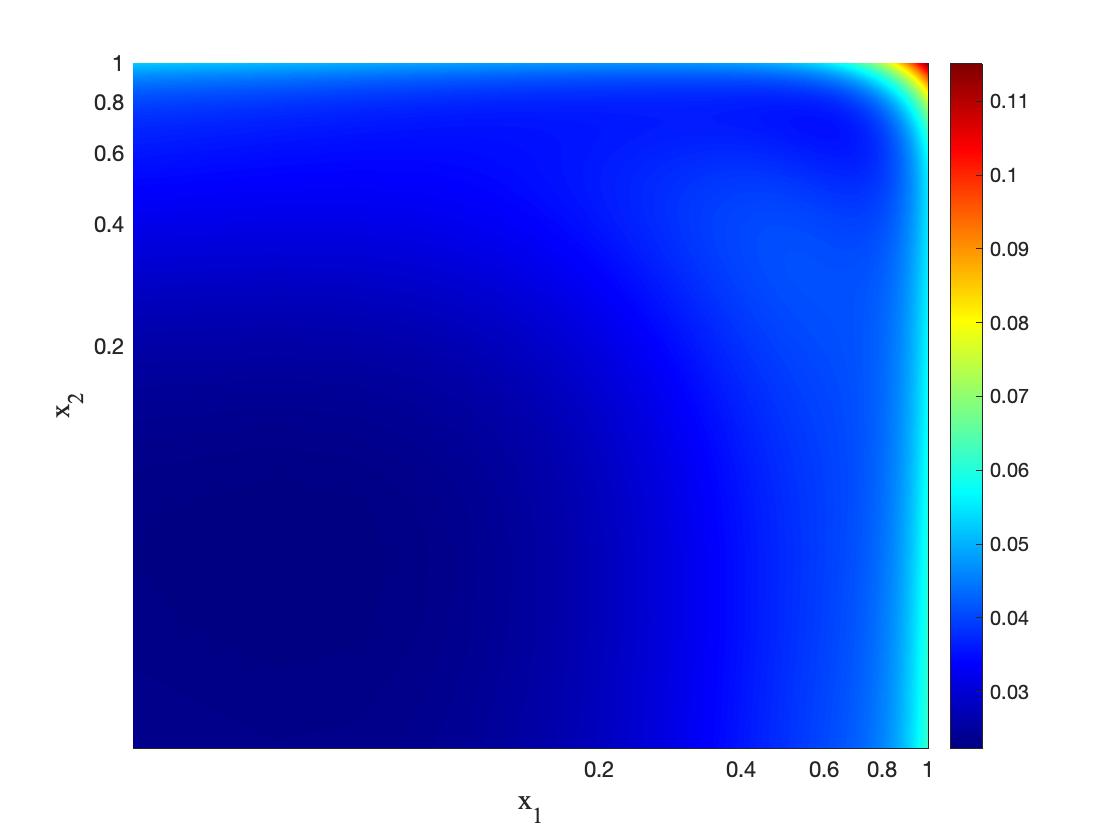}
\end{minipage}
}

\subfigure[$u_{\text{true}}$ while $z=x$]{
\begin{minipage}[t]{0.25\linewidth}
\centering
\includegraphics[width=1.15\textwidth]{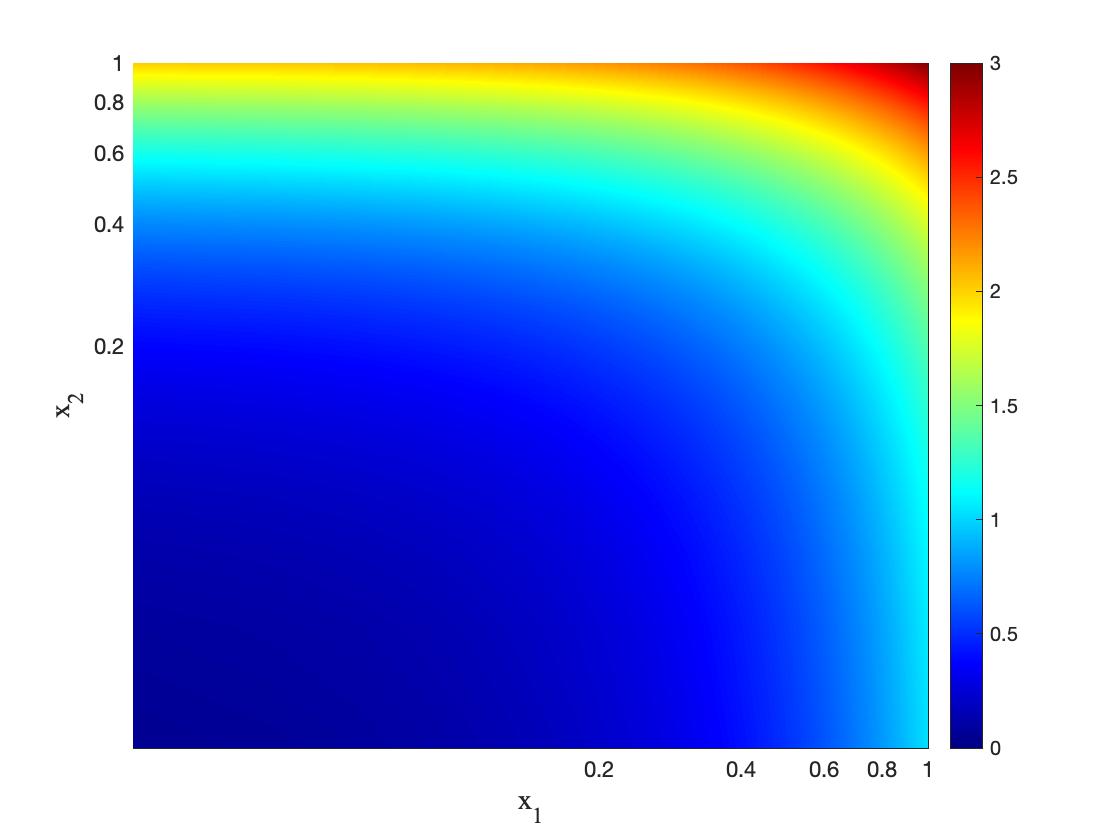}
\end{minipage}
}
\subfigure[$u_{\text{prediction}}$ while $z=x$]{
\begin{minipage}[t]{0.25\linewidth}
\centering
\includegraphics[width=1.15\textwidth]{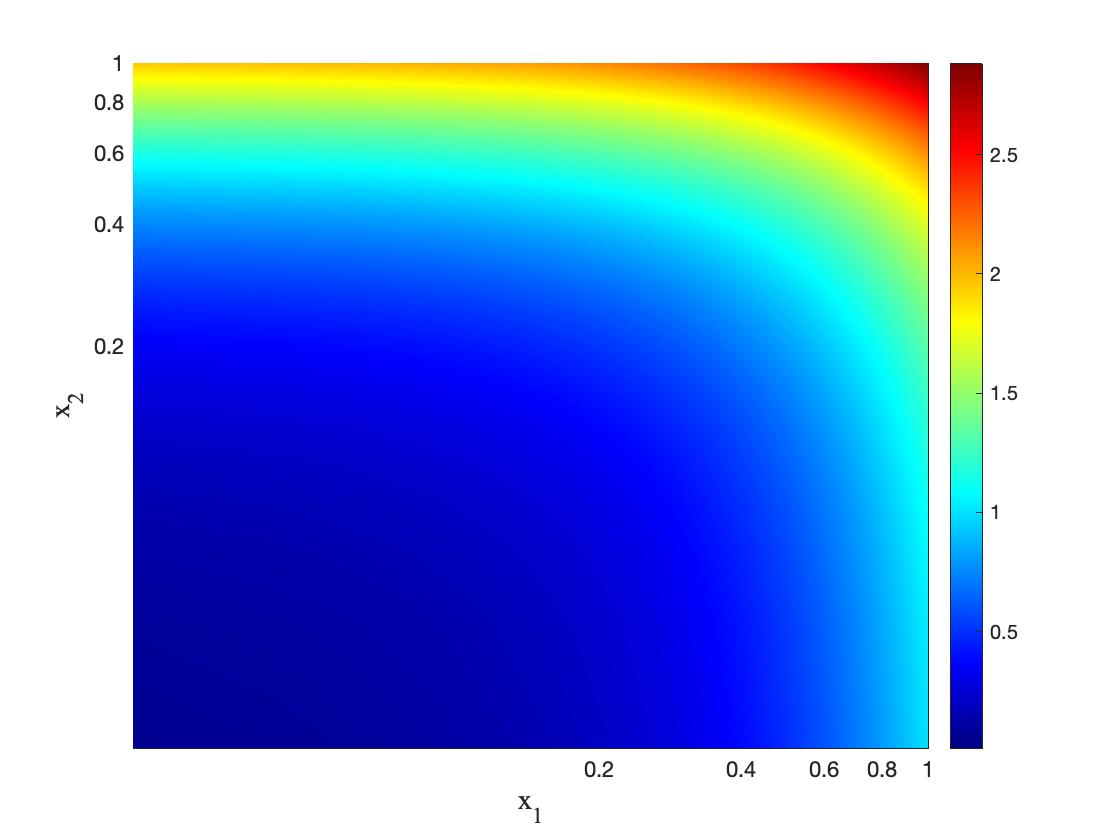}
\end{minipage}
}
\subfigure[$u_{\text{difference}}$ while $z=x$]{
\begin{minipage}[t]{0.25\linewidth}
\centering
\includegraphics[width=1.15\textwidth]{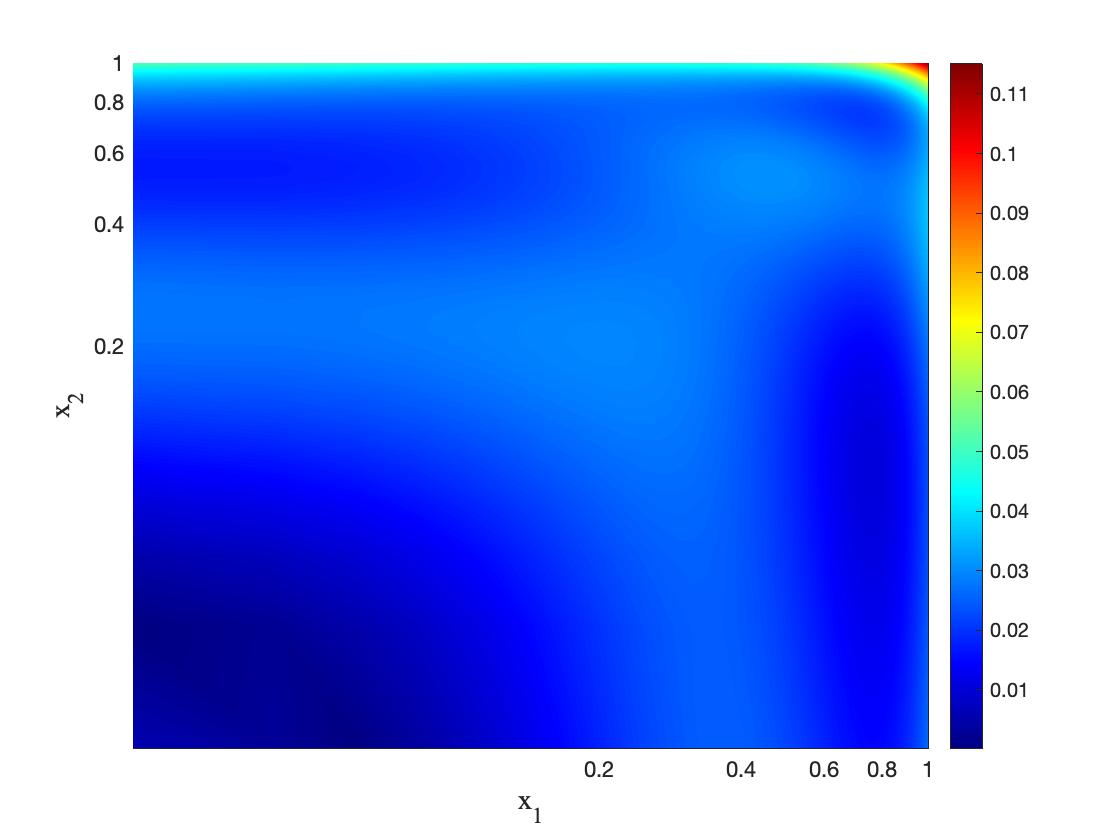}
\end{minipage}
}
\centering
\caption{Results of  \ref{ex:3d:lessSmooth} with $\alpha=0.4$. The first row displays the true value $u_{\text{true}}$, the predictive value $u_{\text{prediction}}$, and the difference $|u_{\text{true}}-u_{\text{prediction}}|$ while the slice is $z=1$. The second row presents the corresponding images with slice $z=x$.}
\label{3d_x1_x}
\end{figure}

\section{Concluding remarks}\label{Sect:Conc}
In this paper, we propose a novel structure of neural network called f-WANs, based on the weak form of the FADE, which shows its efficiency to handle both smooth and less smooth solutions in high dimensions. Our approach combines Monte Carlo sampling method with the neural network to approximate the solution of the fractional differential equation, which can be extend to general fractional differential equations that admits a variational form. Our experiments focus on 2D and 3D problems defined on rectangle domains, but it's possible to  extend our proposed architectures to the general convex bounded Lipschitz domains by carefully reparametrizing the domain, although this may involve technical considerations and efforts.


\begin{thebibliography}{1}

\bibitem{AbaBar} M. Abadi, P. Barham, J. Chen,  et al. Tensorflow: a system for large-scale machine learning. {\it Osdi.} 16, (2016), 265-283.


\bibitem{AdaFou} R. A. Adams and J. J. Fournier, {\em Sobolev spaces.} Elsevier, (2003), Netherlands.

\bibitem{Chen} W. Chen. A speculative study of 2/3-order fractional Laplacian modeling of turbulence: Some thoughts and conjectures. {\it CHAOS.}, 16.2 (2006), 023126.







\bibitem{ErvRoo} V. J. Ervin and J. P. Roop, Variational formulation for the stationary fractional advection dispersion equation. {\it Numer. Methods Partial Differ. Equ.} 22.3, (2006), 558-576.

\bibitem{ErvRoo1} V. J. Ervin and J. P. Roop, Variational solution of fractional advection dispersion equations on bounded domains in $R^d.$ {\it Numer. Methods Partial Differ. Equ.} 23.2, (2007), 256-281.


\bibitem{EYu} W. E and B. Yu, The deep Ritz method: a deep learning-based numerical algorithm for solving variational problems. {\it Commun. Math. Stat.} 1.6, (2018), 1-12.


\bibitem{FanBoh} Y. Fan, C. O. Bohorquez and L. Ying, BCR-Net: A neural network based on the nonstandard wavelet form. {\it J. Comput. Phys.} 384 (2019), 1-15.


\bibitem{FanLin} Y. Fan, L. Lin and L. Zepeda-Núnez,  A multiscale neural network based on hierarchical matrices. {\it Multiscale Model Simul.} 17.4, (2019), 1189-1213.




\bibitem{GuoWu} L. Guo, H. Wu, X. Yu and T. Zhou, Monte Carlo fPINNs: Deep learning method for forward and inverse problems involving high dimensional fractional partial differential equations. {\it Comput. Method. Appl. M.} 400, (2022), 115523.

\bibitem{JinRun} B. Jin and W. Rundell, A tutorial on inverse problems for anomalous diffusion processes. {\it Inverse Probl.} 31.3, (2015), 035003.


\bibitem{KhooLu} Y. Khoo, J. Lu and L. Ying, Solving parametric PDE problems with artificial neural networks. {\it Eur. J. Appl. Math.} 32.3 (2021), 421-435.

\bibitem{KhoYin} Y. Khoo and L. Ying, SwitchNet: a neural network model for forward and inverse scattering problems. {\it SIAM J. Sci. Comput.} 41.5 (2019), A3182-A3201.


\bibitem{KilSri} A. A. Kilbas, H. M. Srivastava and J. J. Trujillo, {\em Theory and applications of fractional differential equations.} Elsevier, (2006), Netherlands.


\bibitem{KingBa} D. P. Kingma and J. Ba, Adam: A method for stochastic optimization. (2014), arXiv:1412.6980.


\bibitem{KubRys} A. Kubica, K. Ryszewska and M. Yamamoto, {\em Time-Fractional Differential Equations: A Theoretical Introduction.} Springer, (2020), Singapore.

\bibitem{LiLu} Y. Li, J. Lu and A. Mao, Variational training of neural network approximations of solution maps for physical models. {\it J. Comput. Phys.} 409, (2020), 109338.

\bibitem{LongLu} Z. Long, Y. Lu, X. Ma and B. Dong, PDE-Net: Learning PDEs from Data. {\it Proc. Mach. Learn. Res.} 80, (2018), 3208-3216.





\bibitem{Mainar} F. Mainardi, {\em Fractional calculus and waves in linear viscoelasticity: an introduction to mathematical models.} World Scientific, (2010), Singapore.

\bibitem{MetKla} R. Metzler and J. Klafter, The random walk's guide to anomalous diffusion: a fractional dynamics approach. {\it Phys. Rep.} 339.1, (2000), 1-77.

\bibitem{NabMei} M. A. Nabian and H. Meidani,  A deep learning solution approach for high-dimensional random differential equations. {\it Probabilistic Eng. Mech.} 57, (2019), 14-25.

\bibitem{PangLu} G. Pang, L. Lu and G. E. Karniadakis, fPINNs: Fractional physics-informed neural networks. {\it SIAM J. Sci. Comput.} 41.4, (2019), A2603-A2626.


\bibitem{Pod} I.~Podlubny,  {\em Fractional Differential Equations.} Academic Press, (1999), New York.


\bibitem{RaiPer} M. Raissi, P. Perdikaris and G. E. Karniadakis,  Physics-informed neural networks: A deep learning framework for solving forward and inverse problems involving nonlinear partial differential equations. {\it J. Comput. Phys.} 378, (2019), 686-707.

\bibitem{Roop} J. P. Roop,  Computational aspects of FEM approximation of fractional advection dispersion equations on bounded domains in $R^2$. {\it J. Comput. Appl. Math.} 193.1, (2006), 243-268.



\bibitem{ShaTan} T. Shan, W. Tang, X. Dang, et al. Study on a fast solver for Poisson’s equation based on deep learning technique. {\it in: IEEE Trans. Antennas Propag.} 68.9, (2020). 6725-6733.


\bibitem{ShlWes} M. F. Shlesinger, B. J. West and J. Klafter, L{\'e}vy dynamics of enhanced diffusion: Application to turbulence. {\it Phys. Rev. Lett.} 58.11, (1987), 1100.


\bibitem{SirSpi} J. Sirignano and K. Spiliopoulos, DGM: a deep learning algorithm for solving partial differential equations. {\it J. Comput. Phys.} 375, (2018), 1339–1364.


\bibitem{WanYan} H. Wang, D. Yang and S. Zhu, Inhomogeneous Dirichlet boundary-value problems of space-fractional diffusion equations and their finite element approximations. {\it SIAM J. Numer. Anal.} 52.3, (2014), 1292-1310.

\bibitem{Weilbe} M. Weilbeer, {\em Efficient numerical methods for fractional differential equations and their analytical background.} Techn. Univ., Diss., (2005), Braunschweig.


\bibitem{ZanBao} Y. Zang, G. Bao, X. Ye and H. Zhou, Weak adversarial networks for high-dimensional partial differential equations. {\it J. Comput. Phys.} 411, (2020), 109409.

\bibitem{ZhuLiu} P. Zhuang, F. Liu, V. Anh and I. Turner, Numerical methods for the variable-order fractional advection-diffusion equation with a nonlinear source term. {\it SIAM J. Numer. Anal.} 47.3, (2009), 1760-1781.







\bigskip
\newpage
\end{thebibliography}
\end{document}